\documentclass[11pt,letterpaper]{amsart}

\usepackage[margin=1.4in]{geometry}

\usepackage[pdftex]{graphicx, hyperref}
\usepackage{amsmath, amssymb, amsthm, float}
\usepackage{cancel}
\usepackage{tikz}
\usepackage{lipsum} 
\usepackage[dvipsnames]{xcolor}
\usepackage{datetime} 
\usepackage{svg}
\usepackage{caption}
\usepackage{tabularray}
\usepackage[all]{xy}
\usepackage{quiver}
\usepackage{needspace}
\usepackage[pagewise]{lineno}

\usepackage{todonotes}
  
\numberwithin{equation}{section}

\newcommand{\PLN}{\mathrm{PL}_{n}(\mathbb{R})}

\theoremstyle{plain}  
\newtheorem{thm}{Theorem}[section]
\newtheorem{prop}[thm]{Proposition}
\newtheorem{cor}[thm]{Corollary}
\newtheorem{lem}[thm]{Lemma}

\theoremstyle{definition}
\newtheorem{defn}[thm]{Definition}

\newtheorem{term}[thm]{Terminology}
\newtheorem{exa}[thm]{Example}

\theoremstyle{remark}

\begin{document}

\title[Forest diagrams and lengths for the generalised Thompson's group $F(n)$]{Forest diagrams and lengths for the generalised Thompson's group $F(n)$}

\date{\today}
\subjclass[2020]{Primary 20F65;   
                 Secondary 20F05.
                 } 

\keywords{Generalised Thompson group F(n), forest diagrams, word metric.}

\author[M.~Gómez Reynolds]{Martín Gómez Reynolds}
\address{University of Glasgow, Glasgow, Scotland}
\email{martinantonio2002@gmail.com}

\begin{abstract}
 We extend the concept of \emph{two-way forest diagrams}, introduced by Belk and Brown in 2003, to represent elements of $F(n)$ as a pair of infinite, bounded $n$-ary forests together with an order-preserving bijection of the leaves. This representation allows us to develop an alternative way to compute the length of an element of $F(n)$, distinct from the formula established by Fordham and Cleary in 2009. As an application of our length formula, we re-prove the existence of \emph{dead end elements} in $F(n)$ and show that their depth is always two,  first proved by Wladis in 2009. 
\end{abstract}

\maketitle

\section{Introduction}

Thompson's group $F$ is one of the most well-studied groups in geometric group theory. As an extension of the work done by Higman in \cite{higman1974finitely}, Brown defined a generalisation of $F$ in \cite{BROWN198745}, called the generalised Thompson group $F(n)$ or the Brown-Thompson group $F(n)$. This group shares many of the interesting properties that $F$ has. It can be defined by the following infinite presentation: 
\begin{align*}
    F(n) \cong \langle x_0, x_1, x_2, \dots \mid x_j x_i  = x_{i} x_{j+n-1}  \textit{ for } i < j\rangle, 
\end{align*}

Fordham's method for computing lengths of elements in $F$ with respect to the $\{x_0, x_1\}$ generating set, presented in \cite{blake2003minimal}, enabled results such as the existence of dead end elements in $F$ and the fact that $F$ is not almost convex, shown by Cleary and Taback in \cite{cleary2004combinatorial} and \cite{cleary2003thompson}, respectively. Later, Belk and Brown in \cite{belk2005forest} developed an alternative length formula for $F$ using forest diagrams, which can be viewed as a simplification of Fordham's work. Forest diagrams have subsequently proven useful in the study of the Cayley graph of $F$ \cite{BelkBux2005,francesco,elder2010counting}. In 2009, Fordham and Cleary extended Fordham's original work to provide a length formula for the group $F(n)$ with respect to the $\{ x_0, x_1, \dots, x_{n-1}\}$ generating set. 

The main result  of this paper consists in adapting Belk and Brown's forest diagrams to represent elements in $F(n)$, thus providing an alternative to Fordham and Cleary's method to calculate lengths in $F(n)$. Increasing the value of $n$ introduces significant complications in the way that forest diagrams are constructed. In particular, the homeomorphism $\psi_n$ from Proposition \ref{homeomorphism}, required to prove that $F(n)$ and $\PLN$ are isomorphic, is notably more complex than the one defined by Burillo in \cite[Section 1.4]{burrillobook} for $n=2$. Also, the larger values of $n$ impose restrictions on how the generators of $F(n)$ interact with forest diagrams: left-multiplying by $x_0$ moves us along the forest diagram in steps of $n-1$; left-multiplying by the remaining $n-1$ generators constructs \textit{$n$-carets} on the \textit{current tree} and the $n-2$ trees immediately to its right. This is one of the major differences to forest diagrams in $F(2)$ and is examined in depth in the latter half of Section \ref{section1}. In Section \ref{section2}, we give a classification of carets and spaces of forest diagrams of elements of $F(n)$. This classification is the basis for the labelling scheme presented in Section \ref{thelengthformulasection}, leading us to the generalised length formula in Theorem \ref{genthm}. Section \ref{proof-section} is entirely devoted to proving Theorem \ref{genthm}. The proof is quite long but relatively  straightforward, simply examining all the possible ways the generators of $F(n)$ can affect the length of an element. In Section \ref{final-section}, we conclude with an application of our new formula to simplify Wladis' work on dead end elements in $F(n)$, presented in \cite{wladis2008unusual}.

This paper is based on the author’s Master’s thesis at the University of Glasgow. I would like to thank my thesis supervisor, Jim Belk, for his support and advice throughout the last few months. His commitment to teaching and mathematics is an inspiration and has made this project a joy to work on.

\section{Forest Diagrams for \texorpdfstring{$F(n)$}{F(n)}} \label{section1}

\begin{defn}
 For $n\in \mathbb{N} \text{ and } n\geq 2 $, let $\PLN$ be the group of all piecewise-linear, orientation-preserving homeomorphisms $f$ from $\mathbb{R}$ to itself such that 

   \begin{enumerate}
        \item $f$ has finitely many breakpoints in the ring $\mathbb{Z}  \left[ \frac{1}{n} \right] $,
        \item on each interval of differentiability the slope of $f$ is a power of $n$, and
        \item  at the rightmost and leftmost interval of differentiability the slope of $f$ is 1  and the constant term is a multiple of $n-1$.
    \end{enumerate}\textbf{}

\end{defn}

\begin{prop} \label{homeomorphism}
    $\PLN$ and $F(n)$ are isomorphic. 
\end{prop}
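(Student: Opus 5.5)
The plan is to deduce the statement from Brown's description of $F(n)$ as a group of piecewise-linear maps of an interval, and then transport that description to $\mathbb{R}$ by conjugating with an explicit homeomorphism $\psi_n\colon(0,1)\to\mathbb{R}$. Concretely, I will use the standard fact, from \cite{BROWN198745}, that $F(n)$ is isomorphic to the group $\mathrm{PL}_n[0,1]$. This is the group of piecewise-linear orientation-preserving homeomorphisms of $[0,1]$ with finitely many breakpoints, all in $\mathbb{Z}[\frac1n]$, and with every slope a power of $n$. The isomorphism is the usual one, sending $x_0,x_1,\dots$ to the maps that $n$-ary-expand successively smaller right-hand subintervals. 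Proving Proposition \ref{homeomorphism} then reduces to constructing $\psi_n$ so that $f\mapsto \psi_n\circ f\circ\psi_n^{-1}$ maps $\mathrm{PL}_n[0,1]$ bijectively onto $\PLN$.

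The construction of $\psi_n$ is forced by the behaviour of germs at the endpoints. Near $0$, an element of $\mathrm{PL}_n[0,1]$ is $t\mapsto n^k t$, and near $1$ it is $t\mapsto 1-n^k(1-t)$. These germs must become translations by multiples of $n-1$, which is why condition (3) appears.

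So I subdivide each interval $[n^{-j-1},n^{-j}]$ near $0$ into the $n-1$ pieces $[m n^{-j-1},(m+1)n^{-j-1}]$ for $m=1,\dots,n-1$. Each piece has length $n^{-j-1}$, and I send it affinely, with slope $n^{j+1}$, onto a unit interval $[a,a+1]$ with $a\in\mathbb{Z}$, consecutively and decreasing to $-\infty$. The map $t\mapsto nt$ carries piece $m$ of level $j+1$ onto piece $m$ of level $j$, which is exactly $n-1$ pieces further along. Hence it is conjugated to the translation $x\mapsto x+(n-1)$, and more generally $n^k t$ becomes $x\mapsto x+k(n-1)$.

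I will do the mirror-image construction near $1$, sending the pieces to unit intervals increasing to $+\infty$. A finite middle region, say between $n^{-1}$ and $1-n^{-1}$, gets subdivided into $n$-adic pieces matched affinely to unit intervals, and I will need to check that the integer counts glue consistently. On every piece $\psi_n$ is affine with slope a power of $n$ and maps $\mathbb{Z}[\frac1n]$ onto $\mathbb{Z}[\frac1n]$, and so does $\psi_n^{-1}$.

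With $\psi_n$ in hand, I verify that conjugation sends $\mathrm{PL}_n[0,1]$ into $\PLN$.
\begin{enumerate}
\item \emph{Slopes and breakpoints.} The slopes of the composite are products of powers of $n$, hence powers of $n$. Its breakpoints lie in $\mathbb{Z}[\frac1n]$.
\item \emph{Finiteness of breakpoints.} Near each end $f$ is a scaling germ, which becomes a single translation by a multiple of $n-1$. So the only breakpoints are the images of the finitely many breakpoints of $f$, together with the breakpoints of $\psi_n^{\pm1}$ inside a compact region. Because $f$ maps $n$-adic points to $n$-adic points, only finitely many of the latter survive.
\end{enumerate}
Conversely, given $g\in\PLN$, the map $\psi_n^{-1}g\psi_n$ is a translation by $k(n-1)$ near $\pm\infty$, so near $0$ and $1$ it is a scaling by $n^{k}$ (respectively $n^{k'}$). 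It therefore extends to an element of $\mathrm{PL}_n[0,1]$. This gives a bijective homomorphism.

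The main obstacle I expect is the middle region and the endpoint bookkeeping. There I must make the integer labels of the pieces glue consistently, and I must check carefully that an element of $\mathrm{PL}_n[0,1]$ which is linear on an interval straddling infinitely many pieces of $\psi_n$ produces only finitely many genuine breakpoints after conjugation. As a fallback, and as a consistency check, I will compute the images $\psi_n x_i\psi_n^{-1}$ explicitly. I expect $x_0$ to become $x\mapsto x+(n-1)$ outside a compact set, and $x_1,\dots,x_{n-1}$ to become maps supported on a half-line. I will then confirm the relations $x_jx_i=x_ix_{j+n-1}$ directly in $\PLN$.
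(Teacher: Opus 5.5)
Your proposal is correct and follows the paper's route. The paper's proof writes down an explicit piecewise-linear homeomorphism $\psi_n\colon\mathbb{R}\to[0,1]$ that sends the unit intervals to the $n-1$ pieces of each level near $0$, to $n-2$ middle pieces of length $\frac1n$, and to the mirrored pieces near $1$; this is exactly the inverse of your map, and the paper simply asserts that conjugation by it gives the isomorphism. Your endpoint-germ analysis, where scaling by $n^k$ becomes translation by $k(n-1)$ and which explains condition (3), supplies the verification the paper leaves implicit. One small correction: finiteness of breakpoints in the compact middle region follows from compactness, not from $f$ preserving $\mathbb{Z}[\frac1n]$.
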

\begin{proof}
Let $n\in \mathbb{N} \text{ and }  n\geq 2 $. Define $\psi_n: \mathbb{R} \rightarrow [0,1]$ to be the piecewise-linear homeomorphism with breakpoints at
\[
\psi_n(k) =
\begin{cases} 
\displaystyle 
\frac{1}{n^{\lfloor \frac{-k}{n-1} \rfloor + 1}} - \frac{-k \bmod (n-1)}{n^{\lfloor \frac{-k}{n-1} \rfloor + 2}}, 
& \text{if } k \in \mathbb{Z} \text{ and } k < 0, \\[12pt]
\displaystyle 
1 - \frac{1}{n^{\lfloor \frac{k - n + 2}{n-1} \rfloor + 1}} 
+ \frac{(k - n + 2) \bmod (n-1)}{n^{\lfloor \frac{k - n + 2}{n-1} \rfloor + 2}}, 
& \text{if } k \in \mathbb{Z} \text{ and } k \geq 0.
\end{cases}
\]

 Thus, the desired isomorphism between $F(n)$ and $\PLN$ for any $f\in F(n)$ is $f \mapsto \psi_n  f  \psi_n^{-1}$ .
\end{proof}

\begin{defn}
Let $k, p, n \in \mathbb{N}$. We call an interval of the form 
 \begin{equation*}
     \left[ \frac{k}{n^p},\frac{k+1}{n^p}\right]
 \end{equation*} 
an \emph{$n$-adic interval}. Note that the fractions need not be written in reduced form. Partitions of an interval $I$ into $n$-adic subintervals are called \emph{$n$-adic subdivisions} of $I$.

\end{defn}

\enlargethispage{2\baselineskip}

 Visually, we can think of $\psi_n$ as linearly sending the intervals $[k,k+1]$, where $k\in \mathbb{Z}$, in an order-preserving way onto the infinite $n$-adic subdivision of $[0,1]$ represented by the infinite $n$-ary tree pictured in Figure \ref{fig:homeomorphism_tree}, with the interval $[0,1]$ being sent to the $n$-adic interval $\left[ \frac{1}{n}, \frac{2}{n}\right]$, except for the $n=2$ case where $[0,1]$  gets sent to $\left[ \frac{1}{2}, \frac{3}{4}\right]$.

 \newpage
 
\begin{figure}[H]
    \centering
    \includegraphics[width=0.6\linewidth]{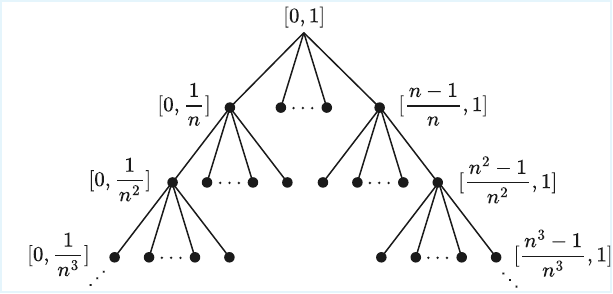}
    \vspace{-2mm}
    \caption{Infinite $n$-ary tree.}
    \label{fig:homeomorphism_tree}
\end{figure}

Note that the homeomorphism $\psi_n$ is congruent with the one defined by Burillo in \cite[Section 1.4]{burrillobook} for the case $n=2$.

Consider the real line as being subdivided by the integers, as shown below. 
\vspace{5mm}
\begin{center}
\begin{tikzpicture}[scale=1.5]
    \draw[latex-latex] (-3.5,0) -- (3.5,0) node[right] {};

    \foreach \x in {-3,-2,-1,0,1,2,3} {
        \draw (\x,0.1) -- (\x,-0.1); 
        \node[below] at (\x, -0.1) {\x}; 
    }
\end{tikzpicture}
\end{center}
Define an \emph{$n$-adic subdivision} of $\mathbb{R}$ as the result of subdividing finitely many of these intervals, of the form $[k,k+1]$ where $k \in \mathbb{Z}$, into $n$-adic subintervals.

We can represent any $n$-adic subdivision of $\mathbb{R}$ with a finite collection of non-trivial $n$-ary trees, which we will call an \textit{$n$-ary forest}. 

\begin{exa} \label{exa4.11}
    Two examples of a $3$-adic subdivision of $\mathbb{R}$ and their corresponding ternary forest. We denote the tree that represents the subdivision of the unit interval with a pointer.

\begin{figure}[H]
    \centering
    \hspace{-11mm}
    \includegraphics[width=0.92\linewidth]{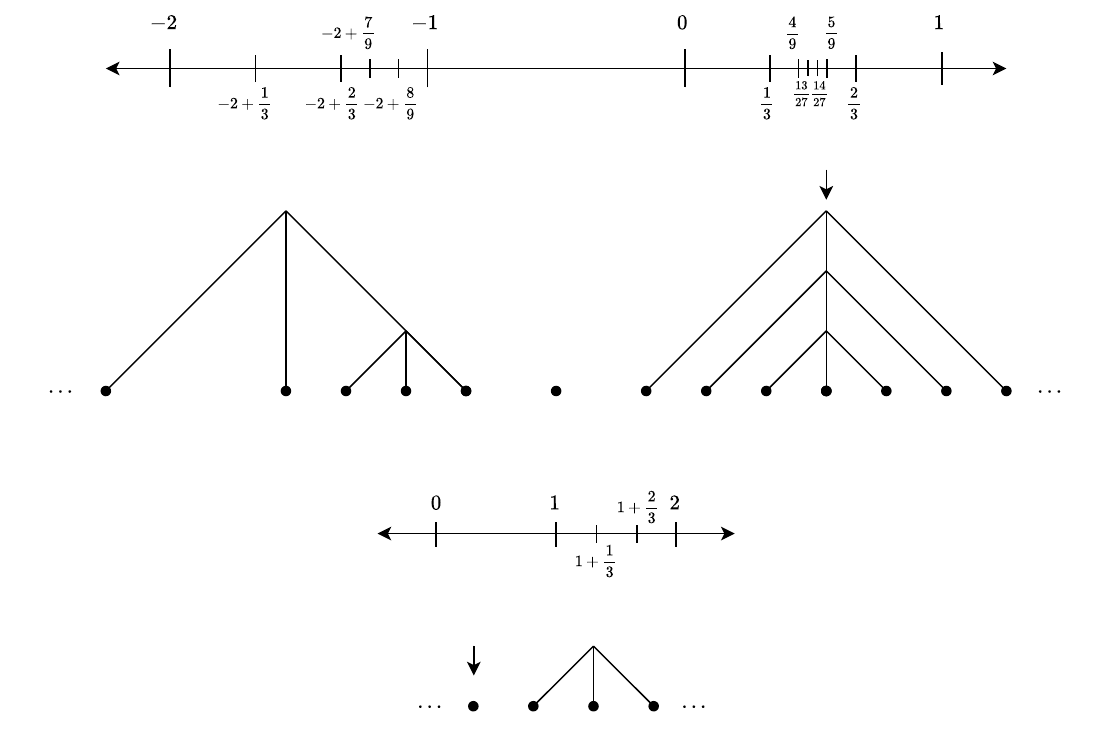}

\end{figure}

Each tree corresponds to an interval $[k,k+1]$ where $k \in \mathbb{Z}$. Each leaf corresponds to a 3-adic subdivision of that interval. Note that there are infinitely many trivial trees for all of the intervals that do not get subdivided.

\end{exa}

\begin{prop} \label{analogousthm}
    Let $f\in \PLN$. There exist $n$-adic subdivisions $\mathcal{D}$ and $\mathcal{R}$ of $\mathbb{R}$ such that $f$ maps each interval of $\mathcal{D}$ linearly onto an interval of $\mathcal{R}$.
\end{prop}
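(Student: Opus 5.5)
We reduce to the compact case and then apply the standard one-interval argument. Let $f\in\PLN$. By condition (3) there are integers $M<0<N$ such that $f$ has slope $1$ on $(-\infty,M]$ and on $[N,\infty)$, with $f(x)=x+a$ on the left end and $f(x)=x+b$ on the right end, where $a,b$ are multiples of $n-1$ (in fact integers). We enlarge $M$ and $N$ in absolute value so that all breakpoints of $f$ lie in $(M,N)$, and so that $f(M)=M+a$ and $f(N)=N+b$ are integers. Outside $[M,N]$, the map $f$ sends each unit interval $[k,k+1]$ linearly onto the unit interval $[k+a,k+1+a]$ or $[k+b,k+1+b]$. We leave these intervals unsubdivided in both $\mathcal D$ and $\mathcal R$ (trivial trees). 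It therefore suffices to treat $f|_{[M,N]}:[M,N]\to[M+a,N+b]$, a PL homeomorphism between intervals with integer endpoints.

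On $[M,N]$ we argue as for $n$-adic intervals. Let $x_0=M<x_1<\dots<x_r=N$ be the breakpoints, each in $\mathbb Z[\frac1n]$, with slopes $n^{e_i}$ on $[x_{i-1},x_i]$. Since $f(M)$ is an integer and $f$ is linear with slope a power of $n$ between breakpoints, an induction shows $f(x_i)\in\mathbb Z[\frac1n]$. Choose $p$ large so that every $x_i$ and every $f(x_i)$ lies in $n^{-p}\mathbb Z$, and so that $p\ge -e_i$ for all $i$. Let $\mathcal D_0$ be the uniform subdivision of $[M,N]$ into intervals of length $n^{-q}$, for a suitable $q\ge p$. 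On the piece $[x_{i-1},x_i]$, each interval of $\mathcal D_0$ is sent to an interval of length $n^{e_i-q}$ starting at a point of $f(x_{i-1})+n^{e_i-q}\mathbb Z$. If $q$ is chosen with $q-e_i\ge p$ for all $i$, then $f(x_{i-1})\in n^{-p}\mathbb Z\subseteq n^{-(q-e_i)}\mathbb Z$, so the image is an $n$-adic interval $[\frac{k}{n^{q-e_i}},\frac{k+1}{n^{q-e_i}}]$. Taking $q=p+\max_i e_i$ (and $q\ge p$) works. The images tile $[M+a,N+b]$, and we let $\mathcal R$ be that tiling together with the unit intervals outside.

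It remains to check that these are genuine $n$-adic subdivisions of $\mathbb R$ in the paper's sense, namely that each unit interval $[k,k+1]$ is partitioned into $n$-adic subintervals. For $\mathcal D$ this holds because the intervals have length $n^{-q}$ with endpoints in $n^{-q}\mathbb Z$, so each lies inside some $[k,k+1]$. For $\mathcal R$, each image interval is $n$-adic with length at most $n^{e_i-q}\le 1$ (as $q\ge e_i$), so it also lies within a single unit interval. Since the image endpoints are integers, the images tile each $[k,k+1]$ with $M+a\le k<N+b$. Only finitely many unit intervals are subdivided, so both are finite subdivisions.

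The step that needs the most care is the bookkeeping that $f(x_i)\in\mathbb Z[\frac1n]$ and the choice of $q$ making every image interval land on the $n$-adic grid of its own scale. A subtlety here is that an interval of the form $[k/n^s,(k+1)/n^s]$ in the paper's definition is allowed for any integer $k$ after translation. Since the definition uses $k\in\mathbb N$, we apply it within each $[k,k+1]$ by translation, as the paper implicitly does for subdivisions of $\mathbb R$.
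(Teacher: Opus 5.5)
Your proof is correct and complete. The paper gives no proof of this proposition; it states it as the $n$-ary analogue of the corresponding fact for $F$, so there is nothing to compare against.

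Your argument is the standard one that fills this in. Outside a window $[M,N]$ where $f$ is an integer translation, you leave unit intervals unsubdivided. Inside it, you refine uniformly with mesh $n^{-q}$, where $q\ge p+\max_i e_i$, so that each image lands on the $n$-adic grid of its own scale $n^{-(q-e_i)}$.

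Two small remarks:
\begin{itemize}
\item The requirement $p\ge -e_i$ is never used. What you actually need is $p\ge 0$, so that $q-e_i\ge 0$ and each image interval lies in a single unit interval; ``$p$ large'' covers this.
\item It is worth stating explicitly that every $x_i\in n^{-p}\mathbb{Z}\subseteq n^{-q}\mathbb{Z}$. This is what guarantees that no interval of $\mathcal{D}_0$ straddles a breakpoint.
\end{itemize}
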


Thus, we can represent any $f \in \PLN$ by a pair of $n$-ary forests, together with an order-preserving bijection of their leaves. We call this an \emph{$n$-ary forest diagram} for $f$. We inherit the same convention as used in \cite{belk2005forest}: the forest representing the domain of $f$ will be on top, and the forest representing the range of $f$ will be below. 

\begin{exa}
    Let  $f \in \PLN$ be the homeomorphism that maps the two $3$-adic subdivisions of $\mathbb{R}$ given in Example \ref{exa4.11} to each other, with the interval $\left[ -1, 0\right]$ being mapped to itself. The forest diagram corresponding to such an $f$  is shown in the following figure.

    \begin{figure}[H]
        \centering
        \includegraphics[width=0.8\linewidth]{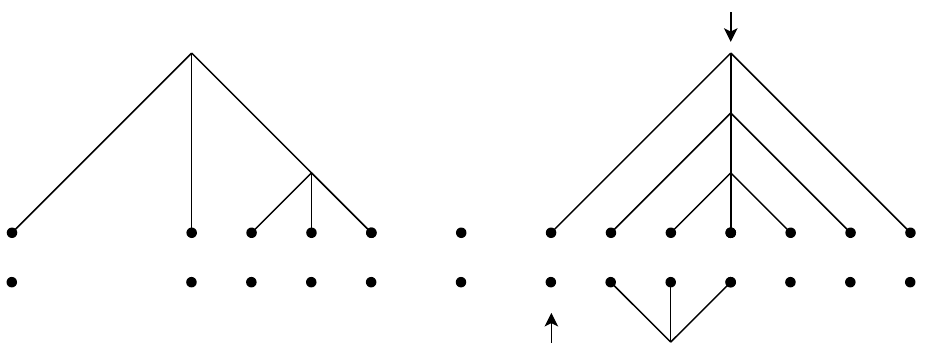}
    \end{figure}
\end{exa}
\enlargethispage{5\baselineskip}
A good reason to define forest diagrams for $F(n)$ is how nicely they interact with the generating set $\{x_0, x_1, \dots, x_{n-1}\}$ of $F(n)$. The following figure shows the forest diagrams of the $n$ generators of $F(n)$.
\vspace{-3mm}
\begin{figure}[H]
    \centering
    \includegraphics[width=0.95\linewidth]{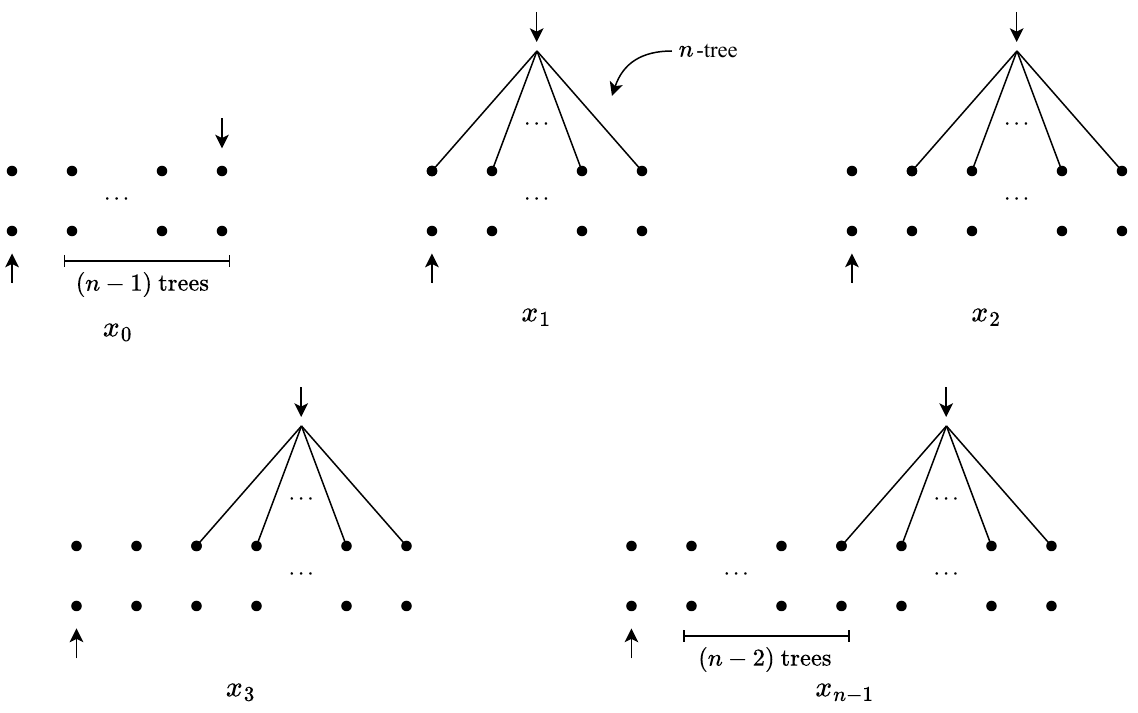}
    \caption{Forest diagrams of the $n$ generators of $F(n)$.}
\end{figure}
\vspace{-3mm}

\newpage

\begin{defn}
    An \emph{n-caret} of a rooted $n$-ary tree is the subgraph containing a node (or the root), the $n$ edges going down from it, and the $n$ vertices at their ends (the node's children).
\end{defn}
\vspace{-3mm}
\begin{figure}[H]
    \centering
    \includegraphics[width=0.3\linewidth]{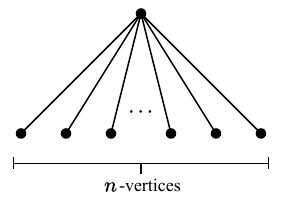}
    \caption{An $n$-caret.}
\end{figure}

Let us now examine how left-multiplying by the finite generating set of $F(n)$ affects forest diagrams. The following proposition can be proven by direct computation. Note that we refer to the tree directly underneath the top pointer as the \emph{current} tree.

\begin{prop}
    Let $\mathfrak{f}$ be a forest diagram for some $f \in F(n)$. Then:

    \begin{enumerate}
        \item A forest diagram for $x_0f$ can be obtained by moving the top pointer of $\mathfrak{f}$ $n-1$ trees to the right. 

        \item A forest diagram for $x_if$ for any $i\in \{1,\dots,n-1\}$ can be obtained by attaching an $n$-caret to the roots of all trees between the $(i-1)$-th tree and $(i+n-1)$-th tree to the right of the current tree, in the top forest of $\mathfrak{f}$. The top pointer will not move, except for the $i=1$ case, where it will point to the root of the attached caret. 
    \end{enumerate}
\end{prop}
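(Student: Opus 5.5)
The plan is to work entirely in $\PLN$ via Proposition \ref{homeomorphism}, and to read off explicit homeomorphisms $x_0,\dots,x_{n-1}\in\PLN$ from the generator forest diagrams in the figure. Then $x_i f$ means first apply $f$, then $x_i$, so the range forest of $f$ is acted on by $x_i$. Precomposition is not involved. I will use the standard relabelling convention that the diagram is normalised so the bottom forest is acted on trivially and the top forest records the domain. Concretely, I will first fix a reduced forest diagram $(\mathcal{D},\mathcal{R},\text{bijection})$ for $f$, as given by Proposition \ref{analogousthm}, with pointers marking the trees over $[0,1]$. Composition is then computed by refining $\mathcal{R}$ and the domain subdivision of $x_i$ to a common $n$-adic subdivision and pulling the refinement back through $f$ into $\mathcal{D}$. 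Since $f$ is linear with slope a power of $n$ on each interval of $\mathcal{D}$, $n$-adic subdivisions pull back to $n$-adic subdivisions, so this step is routine.

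For $x_0$, I will verify from $\psi_n$ and the figure that $x_0$ is the map $t\mapsto t-(n-1)$ away from a compact region. In fact, with respect to the forest picture, $x_0$ is the identity on leaves and simply translates the integer labelling by $n-1$. Composing, the two forests of $f$ are unchanged and only the identification of which top tree lies over $[0,1]$ shifts. This gives part (1). The direction of the shift (right) comes from the sign convention in $x_0$, which I will check on a single example such as $f=\mathrm{id}$.

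For $x_i$ with $1\le i\le n-1$, the forest diagram of $x_i$ in the figure has a single $n$-caret in the top forest. This caret joins the trivial trees at positions $i-1,\dots,i+n-2$ relative to the pointer into one tree with $n$ leaves. The bottom forest has those $n$ intervals merged so that they form the leaves of a caret on one tree, and the top pointer stays on tree $0$, except when $i=1$, where the merged tree itself sits over $[0,1]$. Composing with $f$ and cancelling, the bottom forest of $f$ is absorbed. The effect on the top forest of $f$ is to attach a new root caret whose $n$ children are the roots of the $n$ consecutive trees beginning $i-1$ trees right of the current tree. Equivalently, $x_i$ maps the union of the $n$ intervals $[i-1,i+n-1]$ (relative to the current position) linearly onto one unit interval and shifts everything to the right down by $n-1$. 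This gives part (2), including the pointer behaviour.

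The main obstacle is bookkeeping the integer positions correctly. I must confirm that the generators, pulled back through $\psi_n$ to $\mathbb{R}$, really act by a shift of $n-1$ trees, and that they merge exactly trees $i-1,\dots,i+n-2$. To do this I will evaluate $\psi_n$ at the breakpoints near $0$ and compare with the standard $[0,1]$ definitions of $x_i$ as in Fordham--Cleary. This is where the special $n=2$ behaviour of $\psi_n$ must also be checked, since $[0,1]$ maps to $[\tfrac12,\tfrac34]$ in that case. Once the $x_i$ are identified as explicit elements of $\PLN$, the compositions above are direct computations.
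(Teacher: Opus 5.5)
Your overall strategy of realising the generators as explicit elements of $\PLN$ and composing is what the paper means by ``direct computation''. However, the convention you fix is the wrong one, and with it the computation proves a different statement. If $x_if$ means ``first $f$, then $x_i$'' and the top forest records the domain, then $x_if$ has the same domain as $f$. A common refinement pulled back through $f$ can only hang new carets below leaves of the top forest. It can never attach a new root caret to $n$ trees, and it cannot change which top tree lies over $[0,1]$.

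Concretely, take your maps: $x_0(t)=t-(n-1)$, and $x_i$ compressing $[i-1,i+n-1]$ onto $[i-1,i]$. Post-composition then attaches a caret to trees $i-1,\dots,i+n-2$ of the \emph{bottom} forest and moves the \emph{bottom} pointer, so you would be proving the upside-down version of the proposition. The ``relabelling convention'' sentence does not rescue this; it contradicts the sentence before it.

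The same confusion appears elsewhere. A caret in the top (domain) forest of the generator diagram means $x_i$ \emph{stretches} $[i-1,i]$ onto $[i-1,i+n-1]$; it does not compress. Also, the bottom forest of $f$ is not ``absorbed'': it survives unchanged as the bottom forest of $x_if$. Your proposed check on $f=\mathrm{id}$ would not catch the top/bottom mix-up, because for trivial forests moving the top pointer right and moving the bottom pointer left give the same diagram.

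What is needed is the right-action convention: $x_if$ is ``$x_i$ first, then $f$'', the map $t\mapsto f(x_i(t))$, so precomposition is exactly what is involved. The generators, forced by applying the statement to $f=\mathrm{id}$ with the domain on top, are as follows. First, $x_0(t)=t+(n-1)$. Second, for $1\le i\le n-1$: $x_i(t)=t$ on $(-\infty,i-1]$, $x_i(t)=i-1+n(t-i+1)$ on $[i-1,i]$, and $x_i(t)=t+(n-1)$ on $[i,\infty)$. One checks $x_i\circ x_j=x_{j+n-1}\circ x_i$ for $0\le i<j$, which is the defining relation read with right actions.

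With these, the top forest of $x_if$ is the preimage under $x_i$ of the top forest of $f$:
\begin{itemize}
\item $x_i^{-1}$ squeezes the $n$ trees over $[i-1,i+n-1]$ under one new caret over $[i-1,i]$, and shifts the trees further right down by $n-1$;
\item the bottom forest is untouched, because the range subdivision of $x_i$ is trivial and so no refinement propagates;
\item the tree over $[0,1]$ is unchanged for $i\ge 2$, and is the new tree for $i=1$;
\item similarly, $x_0^{-1}$ translates the top forest by $-(n-1)$, so the top pointer moves $n-1$ trees right.
\end{itemize}
If you used your compressing maps with precomposition, you would instead obtain the $x_i^{-1}$ behaviour described in the paper's later proposition: removing a top caret, or a negative caret falling through to the bottom forest.
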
  

\begin{exa}
Let $f \in F(4)$ have the following forest diagram. 
    \begin{figure}[H]
        \centering
        \includegraphics[width=1\linewidth]{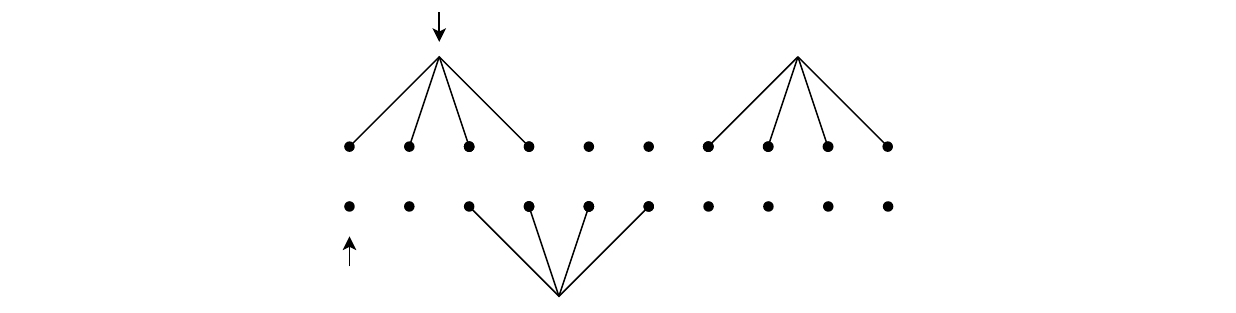}
    \end{figure}

Then the forest diagrams of $x_if$ for $i\in \{0,1,2,3\}$ are pictured in Figure \ref{fig:forest_diagram_example}.

    \begin{figure}[H]
        \centering
        \includegraphics[width=1\linewidth]{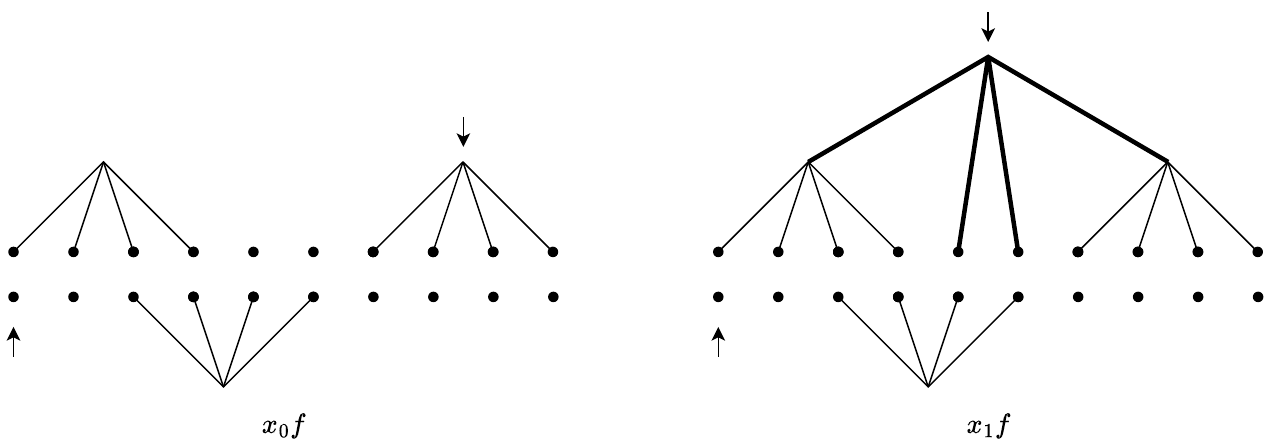}
    \end{figure}
    \begin{figure}[H]
        \centering
        \includegraphics[width=1\linewidth]{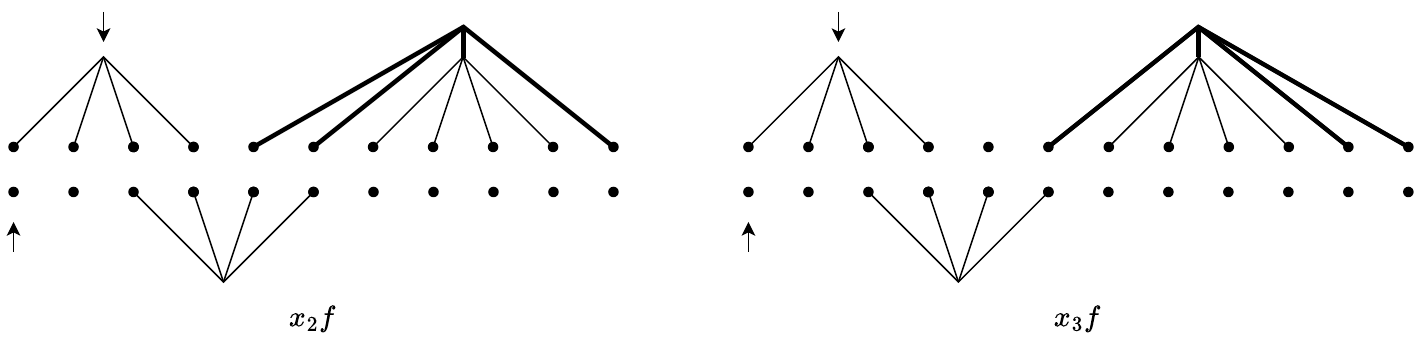}
        \caption{Forest diagrams of $x_if$ for $i\in \{0,1,2,3\}$.}
        \label{fig:forest_diagram_example}
    \end{figure}
    So left-multiplying by $x_0$ moves the top pointer through the forest diagram, whereas $x_1,x_2$ and $x_3$ build carets to the right of the top pointer. 
\end{exa}

We say a forest diagram is \textit{reduced} if it has no opposing pairs of $n$-carets. Reduction of $n$-ary forest diagrams works in the same way as the binary case.  In particular, if $\mathfrak{f}$ is a reduced forest diagram of $f\in F(n)$ then the forest diagram of $x_0f$ will also be reduced. However, the forest diagram of $x_i f$ for $i \in \{1,\dots,n-1\}$ need not be reduced. Also, as in the binary case, we have the following proposition. 

\begin{prop}
    Any element of $\PLN$ has a unique reduced forest diagram representing it. 
\end{prop}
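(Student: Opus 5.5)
Existence follows from Proposition \ref{analogousthm} together with reduction. Uniqueness will come from showing that every forest diagram for $f$ reduces to one canonical diagram, read off from $f$ itself.

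For existence, let $f\in\PLN$. Proposition \ref{analogousthm} gives $n$-adic subdivisions $\mathcal{D},\mathcal{R}$ such that $f$ maps each interval of $\mathcal{D}$ linearly onto an interval of $\mathcal{R}$, hence a forest diagram for $f$. If the diagram has an opposing pair of $n$-carets, meaning $n$ consecutive leaves that form a caret in the top forest and are matched to $n$ consecutive leaves forming a caret in the bottom forest, we delete both carets. The result is still a forest diagram for $f$: the union of the $n$ top intervals is an $n$-adic interval (or a unit interval $[k,k+1]$), and $f$ maps it linearly onto the union of the $n$ bottom intervals, because the slopes on the pieces agree (each piece has length $1/n$ of its parent on both sides). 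Each such deletion reduces the finite number of carets by two, so the process terminates in a reduced diagram.

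For uniqueness, the key step is a canonical description of the reduced diagram. Call an $n$-adic interval $I$ (including the unit intervals $[k,k+1]$) \emph{regular} for $f$ if $f$ is linear on $I$ and $f(I)$ is again an $n$-adic interval (resp.\ unit interval). In any forest diagram for $f$, each top leaf interval is regular. Conversely, consider the top subdivision of a \emph{reduced} diagram. I claim its leaves are exactly the maximal regular intervals, namely those regular intervals not properly contained in a larger regular one. The main obstacle is proving this claim. Suppose a leaf $L$ lies properly inside a regular interval $J$. Choose $J$ to be the parent of $L$ in the $n$-ary subdivision hierarchy (or, at the top level, the unit interval containing it); this parent is regular, since a subinterval of a regular interval obtained by $n$-adic subdivision of $J$ is regular, and we may pass to the smallest regular ancestor. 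I then want to show that the top forest contains the full caret below $J$ and that the bottom forest contains the matching caret below $f(J)$, giving an opposing pair. This uses the following facts.
\begin{itemize}
\item The top forest restricted to $J$ is a subtree rooted at $J$. $J$ is an $n$-adic interval and the leaves tile $\mathbb{R}$ $n$-adically, so the nesting of $n$-adic intervals forces this.
\item Its image under $f$ is a subdivision of $f(J)$ which is $n$-adic, since $f$ restricted to $J$ is an affine map carrying the $n$-adic structure of $J$ to that of $f(J)$, with slope a power of $n$. The bottom forest on $f(J)$ is therefore the same tree shape.
\end{itemize}
Choosing a deepest caret inside $J$ whose children are all leaves then yields an opposing pair, contradicting reducedness. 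Some care is needed with the unit-interval level, where $f(k,k+1)$ must be a unit interval; this holds since $f$ maps unit intervals linearly onto unit intervals exactly when the corresponding trees are trivial on both sides.

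Since the set of maximal regular intervals depends only on $f$, the top forest is determined. The bottom forest is then determined as the collection of their images, and the leaf bijection is forced by order preservation. Hence the reduced diagram is unique.
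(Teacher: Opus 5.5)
Your proof is correct. Note that the paper gives no proof of this proposition at all; it only asserts it holds ``as in the binary case'' of Belk and Brown. So your argument is not a variant of the paper's but a self-contained replacement for that appeal.

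Your key idea is an intrinsic description of the reduced diagram. Its top leaves are exactly the maximal regular intervals: the maximal $n$-adic subintervals $I$ of unit intervals (unit intervals included) on which $f$ is affine with $f(I)$ again such an interval. This gives more than uniqueness. It shows that every forest diagram for $f$ reduces to the same diagram, whatever order the opposing pairs are cancelled in. It also checks directly that nothing breaks with $n$-carets and with trees indexed by $\mathbb{Z}$, which the paper's one-line appeal leaves to the reader.

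There are two small corrections.

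\begin{enumerate}
\item Your remark about the unit-interval level is wrong as stated, though it does no damage because it is not needed.
\begin{itemize}
\item $f([k,k+1])$ need not be a unit interval. For $x_1^{-1}$, a trivial top tree is matched to a bottom leaf of length $1/n$.
\item ``Linear onto a unit interval exactly when the trees are trivial'' fails for non-reduced diagrams. Add the same caret over $[0,1]$ to both forests of the identity.
\item The place where care is actually needed is elsewhere. ``$n$-adic'' in your definition of regular must mean length at most $1$ on the image side too. That is what excludes $[0,1]$ for $x_1$: there $f$ is affine on $[0,1]$, but $f([0,1])$ has length $n$.
\end{itemize}
\item You only prove one inclusion: each leaf of a reduced diagram is maximal regular. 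The reverse inclusion is needed to say the top forest ``depends only on $f$''. It follows because the leaves tile $\mathbb{R}$ and distinct maximal regular intervals have disjoint interiors, so the leaves must be all of them.
\end{enumerate}
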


To end this section, let us consider how left-multiplying by the inverse of the generators of $F(n)$ interacts with forest diagrams. 

\begin{prop}
    Let $\mathfrak{f}$ be a forest diagram for some $f \in F(n)$. Then:

    \begin{enumerate}
        \item A forest diagram for $x^{-1}_0f$ can be obtained by moving the top pointer of $\mathfrak{f}$ $n-1$ trees to the left. 

        \item A forest diagram for $x^{-1}_if$ for any $i\in \{1,\dots,n-1\}$ can be obtained by ``dropping a negative caret" in the $(i-1)$-th tree. In particular:
        \begin{itemize}
            \item If the $(i-1)$-th tree to the right of the current tree is non-trivial, then left-multiplying by $x^{-1}_i$ removes the top caret of the $(i-1)$-th tree and the top pointer does not move, except for the $i=1$ case where it moves to the leftmost child of the original current tree.
            \item If the $(i-1)$-th tree to the right of the current tree is trivial, then left-multiplying by $x_i^{-1}$  attaches a caret in the bottom forest of $\mathfrak{f}$ to the leaf beneath the  $(i-1)$-th tree and creates $n-1$ trivial trees in the top forest of $\mathfrak{f}$ immediately to the right of the $(i-1)$-th tree. I.e.\ the negative caret ``falls through" to the bottom forest.
        \end{itemize}
    \end{enumerate}
\end{prop}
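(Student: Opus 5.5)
The plan is to derive everything from the previous proposition, which describes $x_0 f$ and $x_i f$ for $i\in\{1,\dots,n-1\}$, by inverting those operations. Since $x_0^{-1}(x_0 f) = f$, and left-multiplication by $x_0$ moves the top pointer $n-1$ trees to the right without touching either forest, left-multiplication by $x_0^{-1}$ must undo this. More precisely, take $g = x_0^{-1}f$ and apply the previous proposition to $g$: the diagram of $f = x_0 g$ is obtained from that of $g$ by shifting the pointer right by $n-1$. So the diagram of $g$ is that of $f$ with the pointer shifted left by $n-1$. This uses only that forest diagrams determine elements, which follows from Proposition \ref{analogousthm} together with the isomorphism of Proposition \ref{homeomorphism}, so uniqueness up to reduction is not needed. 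Part (1) follows at once.

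For part (2) I will argue the same way, with $g = x_i^{-1} f$, so that $f = x_i g$. I will describe a candidate diagram $\mathfrak{g}$ and check that applying the $x_i$ rule of the previous proposition to $\mathfrak{g}$ gives a diagram representing $f$, possibly after inserting or removing an opposing pair of carets, which does not change the represented element.

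\emph{Case 1: the $(i-1)$-th tree to the right of the current tree is non-trivial.} Let $\mathfrak{g}$ be obtained by deleting the top caret of that tree. This splits it into its $n$ subtrees, which now sit as consecutive trees. For $i\ge 2$ the pointer stays put. For $i=1$ the tree in question is the current tree itself, so the pointer moves to the leftmost of the new trees. Applying $x_i$ to $\mathfrak{g}$ reattaches a caret to exactly those $n$ consecutive trees starting at position $i-1$, and for $i=1$ moves the pointer back to the new root. This recovers $\mathfrak{f}$. The only bookkeeping is to check that the indexing window from the $(i-1)$-th to the $(i+n-2)$-th tree lines up with the $n$ children.

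\emph{Case 2: the $(i-1)$-th tree is trivial.} Then no top caret can be removed. Instead, first replace $\mathfrak{f}$ by an equivalent unreduced diagram: attach an $n$-caret to the top leaf at position $i-1$, and an opposing $n$-caret to the matching leaf in the bottom forest. This represents the same element, since the map is linear on that interval and subdividing both sides $n$-adically in the same way changes nothing. The tree at position $i-1$ is now non-trivial, so Case 1 applies: removing its top caret leaves $n$ trivial trees in the top forest (the original position plus $n-1$ new ones to its right), while the bottom forest keeps the new caret. This is exactly the described ``falling through''. The $n$ top leaves correspond in order to the $n$ leaves of the new bottom caret, so the leaf bijection stays order-preserving.

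The main obstacle is making the Case 2 insertion rigorous in $\PLN$ and the indices precise. One must verify that a trivial top tree at position $i-1$ pairs with a single leaf in the bottom forest, and that splitting the unit interval it represents is compatible with $\psi_n$'s convention that intervals $[k,k+1]$ correspond to trees. Trees to the right shift by $n-1$ positions, which is consistent with condition (3) on constant terms being multiples of $n-1$. I would do this by a direct check on interval endpoints, treating $i=1$ separately for the pointer.
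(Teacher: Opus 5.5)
Your argument is correct. It fills in something the paper leaves unproved: the paper states this proposition with no proof, as the inverse counterpart of the preceding one, which it justifies only ``by direct computation'' (in effect, conjugating through $\psi_n$ and checking the interval maps).

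Your route is different. You treat the forward proposition as a black box and invert it formally: you exhibit a candidate diagram, apply the $x_0$ or $x_i$ rule to it, and conclude from the fact that a diagram determines an element of $\PLN$. Two things make this work. First, each of your moves shifts trees by exactly $n-1$, so the end translations remain multiples of $n-1$. Second, inserting an opposing pair of $n$-carets does not change the element. That expansion step turns the ``falling through'' case into a special case of caret removal. What this buys you is a proof with no further computation involving $\psi_n$, at the price of resting entirely on the forward proposition.

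One point of phrasing. In part (1), argue as you do in part (2). Apply the previous proposition to the candidate diagram ($\mathfrak{f}$ with its top pointer moved left by $n-1$), let $h$ be the element it represents, and conclude $x_0h=f$, hence $h=x_0^{-1}f$. Do not apply it to an unspecified diagram of $g$, since forest diagrams are not unique and that version would silently need uniqueness. Your remark that uniqueness is not needed is right only in the candidate form.
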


\begin{exa}
    Let $f\in F(3)$ have the following forest diagram. 
\begin{figure}[H]
        \centering
        \includegraphics[width=1\linewidth]{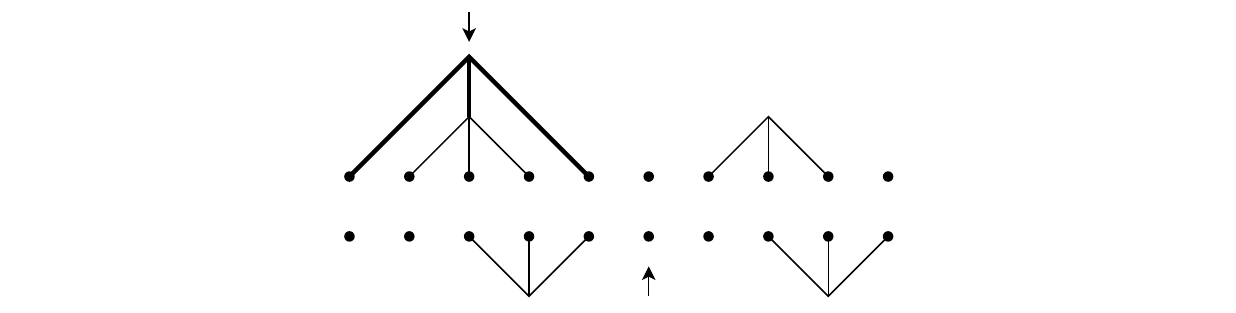}
    \end{figure}
Then the forest diagrams of $x^{-1}_1f$ and $x^{-1}_2f$ are the following. 

        \begin{figure}[H]
        \centering
        \includegraphics[width=1\linewidth]{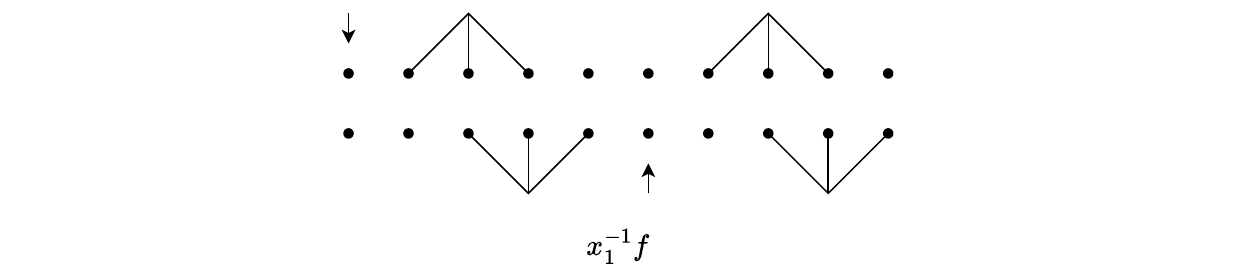}
    \end{figure}
\vspace{-1cm}
        \begin{figure}[H]
        \centering
        \includegraphics[width=1\linewidth]{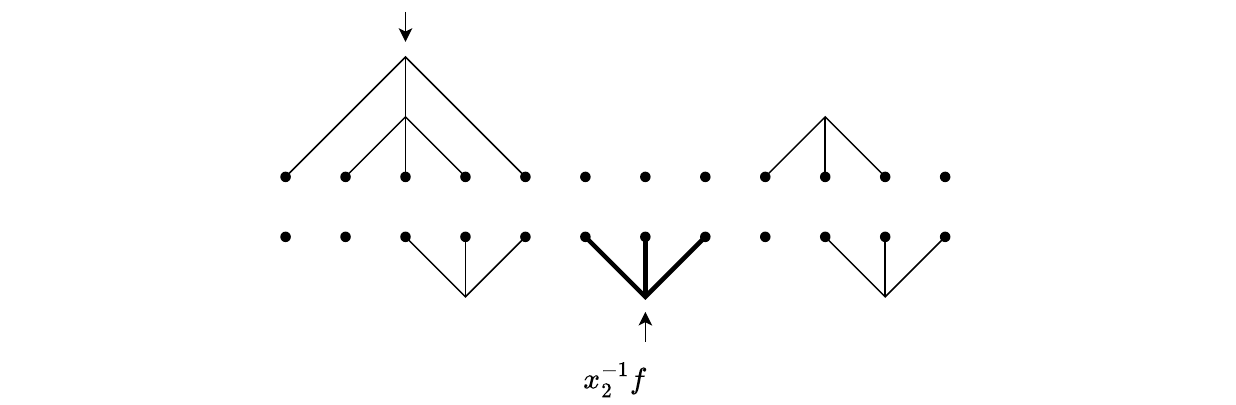}
    \end{figure}

    Left-multiplying $f$ by $x_1^{-1}$ simply removed the highlighted caret. However, as there was no caret in the top forest for $x_2^{-1}$ to remove, a new (highlighted) caret was added to the bottom forest. This results in the creation of two new spaces, one tree to the right of the current tree. 
\end{exa}

\section{Classifying Carets and Spaces}
\label{section2}

Let us number the spaces of a forest diagram $\mathfrak{f}$ for $f\in F(n)$ as follows: the space immediately to the left of the top pointer will be numbered as $0$. The spaces to the left of $0$ will be $-1, -2, -3, \dots$, moving further left; and the spaces to the right of $0$ will be numbered as $1, 2, 3, \dots$, moving further right. 

\begin{defn}
A \emph{marked space} is a space in $\mathfrak{f}$ that is numbered by a multiple of $n-1$.
\end{defn}

Note that, for any $f \in F(2)$, all spaces in the forest diagram of $f$ will be marked spaces. 

\begin{defn}
    We call a space in an $n$-ary forest \emph{interior} if it lies under a tree (or over a tree, if the forest is upside-down) and \emph{exterior} if it lies between two trees.
\end{defn}

\begin{defn}
    If the root of an $n$-caret is attached to the leaf of another $n$-caret, we call the former a \emph{child} and the latter its \emph{parent}. Note that a parent caret can have up to $n$ children.
\end{defn}

\begin{defn}
    Let $\mathfrak{f}$ be an $n$-ary forest. We say an $n$-caret in $\mathfrak{f}$ \emph{encloses} the $n-1$ spaces that lie immediately underneath that caret. 
\end{defn}

\begin{exa}    
The $n-1$ spaces enclosed by each $n$-caret in the forest diagram of $f$ from Example \ref{exaaa} are highlighted with colours in the following figure.
\begin{figure}[H]
    \centering
    \includegraphics[width=1\linewidth]{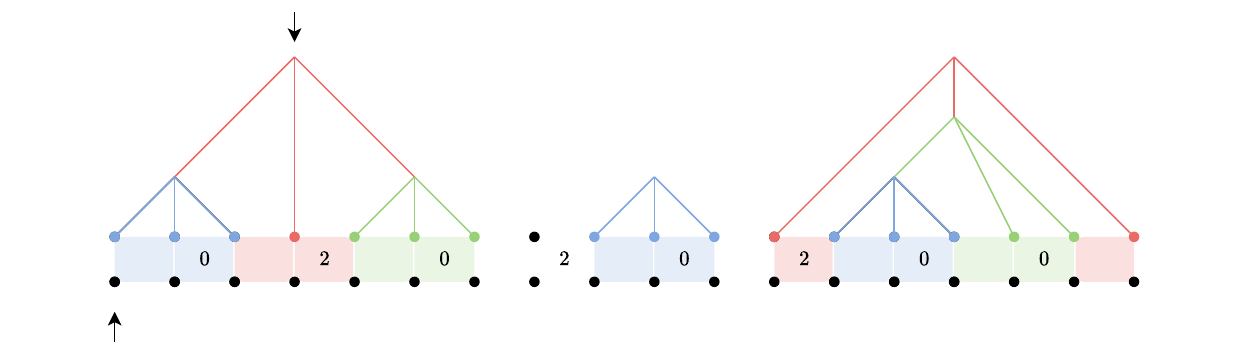}
    \caption{Correspondence between spaces and their enclosing $n$-carets.}

\end{figure}
\end{exa}

\begin{exa} \label{exaaa}
    Consider the element $f\in F(3)$ with the forest diagram from the previous figure. Let us construct, in the most efficient way possible, the forest diagram of $f$ by left-multiplying with the generating set $\{x_0,x_1,x_2\}$. Let us note the number of times we must cross each marked space. We first construct the leftmost tree. To do this, we must first build the two child carets to then be able to construct the parent. 
\vspace{-1mm}
    \begin{figure}[H]
        \centering
        \includegraphics[width=\linewidth]{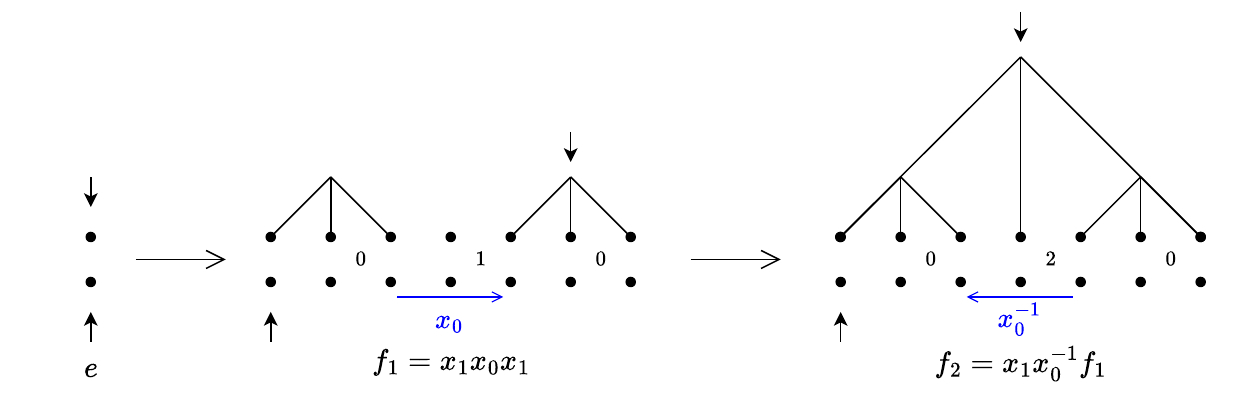}
    \end{figure}

    Notice how we had to cross twice the interior marked space labelled \textbf{2}. Now, we left-multiply by $x^2_1x_0x_1x_0$ to construct the middle tree and start building the rightmost tree. This is pictured in the following figure. 

    \begin{figure}[H]
        \centering
        \includegraphics[width=\linewidth]{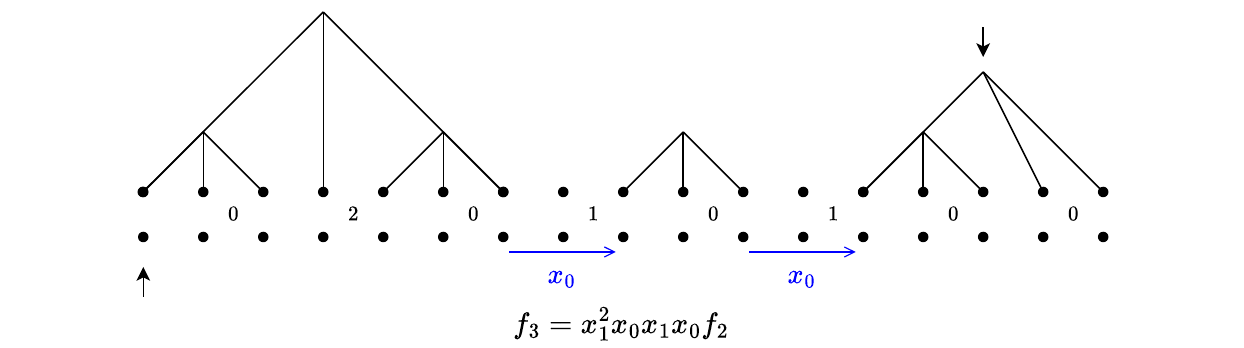}
    \end{figure}

Notice how when building the rightmost tree of the forest diagram of $f_3$, we did not need to cross the two rightmost interior marked spaces. Finally, we move back left (i.e.\ left-multiply by $x^{-1}_0)$ to then complete building the right tree by left-multiplying by $x_2$ and return to the leftmost tree by left-multiplying by $x^{-1}_0$.

\begin{figure}[H]
    \centering
    \includegraphics[width=\linewidth]{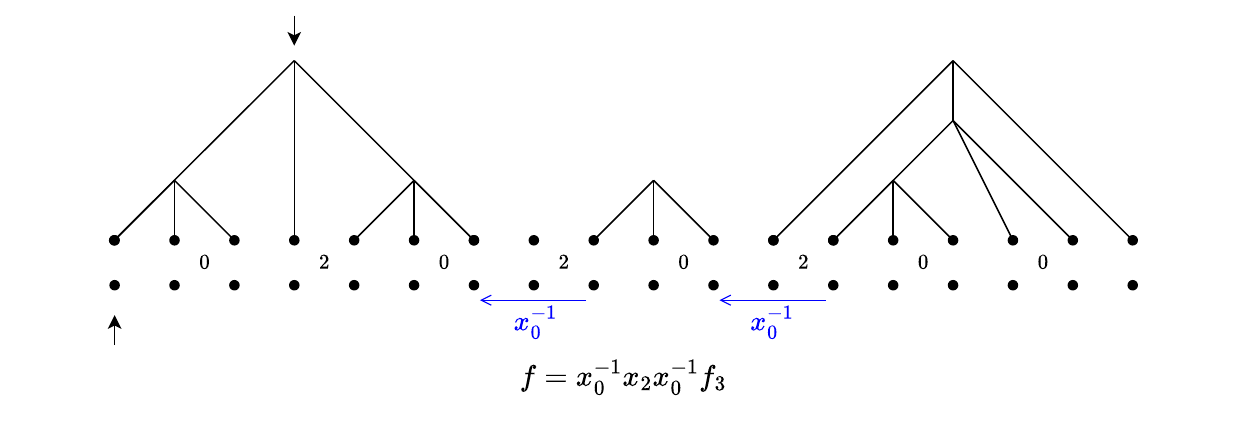}
    \vspace{-10mm}
\end{figure}

Therefore, we get that a minimum-length word for $f$ is \begin{align*}
    x_0^{-1} x_2 x_0^{-1} x_1^{2} x_0 x_1 x_0 x_1 x_0^{-1} x_1 x_0 x_1
\end{align*}
and so $f$ has length 13.
\end{exa}

\section{The Length Formula}  \label{thelengthformulasection}
We are now ready to present the process of finding the length of any element \mbox{$f \in F(n)$}. This will consist of three steps: labelling the marked spaces of the forest diagram of $f$; assigning a weight to each of the marked spaces, according to the combination of labels; and, finally, computing the length of the element with respect to the calculated weights.

Let $f \in F(n)$ and let $\mathfrak{f}$ be its reduced forest diagram. We label the marked spaces of each forest of $\mathfrak{f}$ as follows:

\begin{enumerate}
    \item Label a marked space \textbf{L} (for \emph{left}) if it is exterior and to the left of a pointer. 
    \item Label a marked space \textbf{N} (for \emph{necessary}) if it is exterior and the rightmost marked space to the left of a caret (and not already labelled \textbf{L}); or if it is interior and to the left of a child of the caret that encloses that marked space. 
    \item Label a marked space \textbf{R} (for \emph{right}) if it is exterior and to the right of a pointer (and not already labelled \textbf{N}).
    \item Label a marked space \textbf{I} if it is interior (and not already labelled \textbf{N}).
\end{enumerate}

For the sake of clarity, let us further explain when to label an interior marked space as \textbf{N}. Firstly, observe there is a one-to-one correspondence between interior marked spaces and the carets that enclose them. Each caret has $n$ leaves, which can be numbered left to right from $1$ to $n$.  Let the marked space associated to the caret be in-between the $i$-th and $(i+1)$-st leaf, where $1 \leq i \leq n-1$.  If the caret has a child in the $k$-th leaf and $k > i $ then we label the marked space with \textbf{N}, otherwise it will be labelled \textbf{I}. As an example, see the labelled forest diagram of Figure \ref{fig:labe_example}. 

\begin{figure}[H]
    \centering
    \includegraphics[width=0.45\linewidth]{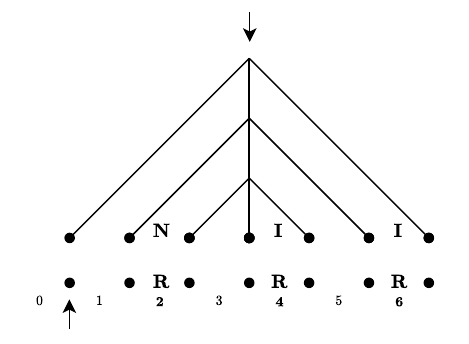}
    \caption{Example of when to label marked spaces as \textbf{N}.}
    \label{fig:labe_example}
\end{figure}

We assign a \emph{weight} to each marked space in the support of $\mathfrak{f}$ according to the combination of its top and bottom labels. See Table \ref{tab:weights}.

\begin{table}[H]
    \centering
    \text{ \hspace{10mm} Top Labels} \\
    \vspace{5pt}
    \begin{tabular}{@{}c@{}}
        \makebox[0pt][c]{
            \rotatebox[origin=c]{90}{{\hspace{-10mm}Bottom Labels}}
        }%
        
        \setlength{\tabcolsep}{8pt} 

    \scalebox{1.1}{ 
        \renewcommand{\arraystretch}{1.5} 
        \begin{tabular}{c|c c c c} 
            & \textbf{L} & \textbf{N} & \textbf{R} & \textbf{I} \\ \hline
            \textbf{L} & 2 & 1 & 1 & 1 \\ 
            \textbf{N} & 1 & 2 & 2 & 2 \\ 
            \textbf{R} & 1 & 2 & 2 & 0 \\ 
            \textbf{I} & 1 & 2 & 0 & 0 \\ 
        \end{tabular}
    }
    \end{tabular}
    \caption{Weight assignment.}
    \label{tab:weights}
\end{table}

\begin{thm}[The generalised length formula]\label{genthm}
 Let $f \in F(n)$ and let $\mathfrak{f}$ be its reduced forest diagram. Then the $\{x_0, \dots, x_{n-1}\}$-length of $f$ is: 

 $$ \ell(f) = \ell_0(f) + \ell_1(f)$$
 where: 
 \begin{enumerate}
     \item $\ell_0(f)$ is the sum of all the weights assigned to the marked spaces of $\mathfrak{f}$, and
     \item $\ell_1(f)$ is the total number of carets in $\mathfrak{f}$.
 \end{enumerate}
\end{thm}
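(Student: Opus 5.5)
The plan follows the standard strategy of Belk and Brown for $F(2)$. Write $\ell'(f)=\ell_0(f)+\ell_1(f)$ for the quantity defined in Theorem \ref{genthm}. The proof reduces to two facts:
\begin{enumerate}
\item $\ell'(\mathrm{id})=0$;
\item for every $f\in F(n)$ and every generator $x_i$, $i\in\{0,\dots,n-1\}$, we have $|\ell'(x_i f)-\ell'(f)|\le 1$, and for every $f\neq \mathrm{id}$ there is some $x\in\{x_0^{\pm1},\dots,x_{n-1}^{\pm1}\}$ with $\ell'(xf)=\ell'(f)-1$.
\end{enumerate}
The first inequality in (2) gives $\ell'(f)\le \ell(f)$ by induction on word length. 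The existence of a decreasing generator gives $\ell(f)\le\ell'(f)$ by induction on $\ell'(f)$. Fact (1) is immediate: the reduced diagram of the identity has no carets and both pointers in the same place, so every marked space in the support has weight $0$. Here we must take care that ``support'' means the finite set of marked spaces lying between the leftmost and rightmost non-trivial data (carets or pointers). Outside it every marked space is labelled \textbf{L}/\textbf{L}, \textbf{R}/\textbf{R}, or similar, and must contribute nothing.

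For the inequality $|\ell'(xf)-\ell'(f)|\le1$, we use the description from Section \ref{section1} of how $x_0^{\pm1}$ and $x_i^{\pm1}$ act on forest diagrams. Since $\ell'(x^{-1}g)$ versus $\ell'(g)$ is the same comparison as $\ell'(g)$ versus $\ell'(xg')$ with $g'=x^{-1}g$, it suffices to treat left multiplication by $x_0$ and by $x_i$, $1\le i\le n-1$.

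\emph{Multiplication by $x_0$.} This moves the top pointer $n-1$ spaces to the right, so exactly one marked space changes its top label from \textbf{R} or \textbf{N} to \textbf{L}, or the reverse. Here we must also check how the rightmost-marked-space-before-a-caret \textbf{N} rule interacts with the pointer. A case check against Table \ref{tab:weights}, over the possible bottom labels, shows the weight changes by exactly $\pm1$, while $\ell_1$ is unchanged.

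\emph{Multiplication by $x_i$.} Here we split according to whether the new caret survives reduction.
\begin{itemize}
\item If it does not cancel, $\ell_1$ rises by $1$. The $n-1$ newly enclosed marked spaces, exactly one of which is marked, change from exterior to interior labels. The \textbf{N}-labels of the merged trees' neighbours may shift, and for $i=1$ the pointer relocation also matters.
\item If it cancels against a bottom caret, $\ell_1$ drops by $1$, and we must compare labels before and after.
\end{itemize}
Each case is a finite table lookup, but the exceptions in the table (the $0$ entries for \textbf{R}/\textbf{I}) are exactly what make the net change $\pm1$. We will also need to check that only the one marked space enclosed by the acted-upon caret, and possibly the marked space immediately left of the affected trees (its \textbf{N} status), change label. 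This locality lemma, that an $x_i$ move alters labels only at at most two marked spaces, should be isolated and proved first.

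For the existence of a decreasing generator, we mimic Belk–Brown. Look at the top label of the marked space $0$ just left of the pointer and the configuration of the current tree and the $n-2$ trees to its right:
\begin{itemize}
\item If the current tree's caret can be removed so that an \textbf{I} or \textbf{N} label becomes something of lower total, use $x_1^{-1}$ (or $x_i^{-1}$).
\item If there is nontrivial support to the right and the relevant spaces are \textbf{R} with weight $2$, use $x_0$ to turn an \textbf{R}/\textbf{N} into \textbf{L}.
\item Otherwise use $x_0^{-1}$.
\end{itemize}
Here we also use the bottom forest: when the top is trivial near the pointer, a negative caret falling through can cancel a bottom caret.

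I expect the main obstacle to be this second half, namely choosing a rule that always yields a decrease. For $n>2$ the pointer moves in steps of $n-1$ while carets span $n$ trees, so the choice depends on the positions of non-trivial trees among the next $n-1$. My first attempt would be to order the cases by the top label of marked space $0$ and of the next marked space $n-1$, and to prove decrease using the locality lemma.
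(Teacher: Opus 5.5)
Your first half (checking $\ell'(\mathrm{id})=0$ and $|\ell'(xf)-\ell'(f)|\le 1$ through the action of $x_0$ and $x_i$) is the same route as the paper's: Lemma \ref{lemma} together with Propositions \ref{prop1}, \ref{prop2} and \ref{prop3}. For $x_0$ you should also include the cases where the right space enters or leaves the support (becoming, or ceasing to be, $\mathbf{L}$ over $\mathbf{R}$, of weight $1$), not only a top label changing from $\mathbf{R}$ or $\mathbf{N}$ to $\mathbf{L}$.

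The genuine gap is in the descent step. You leave it unresolved, and the rule you sketch never uses a positive generator $x_i$ with $1\le i\le n-1$. Your remark that ``a negative caret falling through can cancel a bottom caret'' is backwards. When $x_i^{-1}$ falls through it \emph{creates} a bottom caret and always increases the length (Corollary \ref{hmm}); it is the positive $x_i$ that can cancel a bottom caret.

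A concrete failure is $f=x_1^{-1}$. Its top forest is trivial and its bottom forest is a single caret. The caret's marked space has top label $\mathbf{R}$ and bottom label $\mathbf{I}$, so weight $0$, and $\ell(f)=0+1=1$. The only generator that decreases the length is therefore $x_1=f^{-1}$. None of your rules produces it: there is no top caret to remove, the right space has weight $0$ rather than $2$, and the fallback $x_0^{-1}$ does not decrease the length.

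What is missing is the paper's ordered procedure (Theorem \ref{cond3}), in which bottom-caret cancellation has priority.
\begin{enumerate}
\item If some $x_i$ cancels a bottom caret, it always decreases $\ell$. Here $\ell_1$ drops by one and $\ell_0$ is unchanged: the right space is necessarily $\mathbf{R}$ over $\mathbf{I}$ (weight $0$) and is destroyed, and the left space's weight is unaffected.
\item Only if no such cancellation exists, and some $x_i^{-1}$ removes a top caret, try $x_i^{-1}$. If it fails, Corollary \ref{corrr} makes the right space of $x_i^{-1}f$ equal to $\mathbf{R}$ over $\mathbf{R}$. This forces the right space of $f$ to have labels in $\{\mathbf{R},\mathbf{N}\}$ over $\{\mathbf{R},\mathbf{N}\}$, of weight $2$, and $x_0$ turns its top label into $\mathbf{L}$, lowering the weight to $1$.
\item Otherwise one of $x_0^{\pm1}$ decreases $\ell$. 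In each exceptional case of Proposition \ref{prop1}, the left space is $\mathbf{L}$ over $\mathbf{R}$, $\mathbf{L}$, or $\mathbf{I}$, and $x_0^{-1}$ lowers its weight or removes it from the support. In the case where the right space is $\mathbf{R}$ over $\mathbf{I}$, this step uses precisely that no $x_i$ cancels a bottom caret, so the enclosing bottom caret reaches to the left of the pointer.
\end{enumerate}

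The ordering is essential for $n\ge 3$. Figure \ref{fig:counter_example} shows a diagram where $x_1^{-1}$ removes a top caret and $x_2$ cancels a bottom caret, yet only $x_2$ decreases the length. A rule keyed on top labels near the pointer, as you propose, cannot succeed without first checking for bottom-caret cancellation.
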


Intuitively, the weight of a marked space can be thought of as the number of times it must be crossed during the construction of the forest diagram. 
\newpage
Let us now look at four examples that cover all label combinations.
\begin{exa}
    Let $f\in F(4)$ be the strongly positive element with the forest diagram shown below. 

    \begin{figure}[H]
        \centering
        \includegraphics[width=0.7\linewidth]{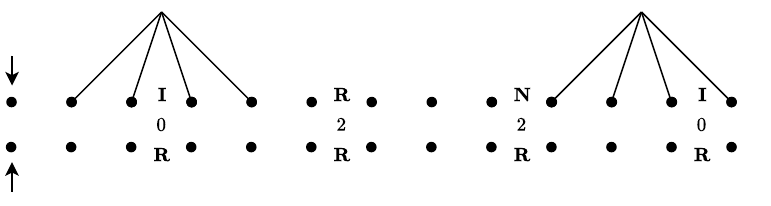}
    \end{figure}

    We have that $\ell_0(f)=4$ and $\ell_1(f)=2$, so the length of $f$ is 6. Indeed, note that 
    $$ x^{-2}_0x_1x_0^2x_2$$
    is a minimum-length word for $f$. 
\end{exa}

\begin{exa}
    Let $f \in F(4)$ be the right-sided element with the following forest diagram.

    \begin{figure}[H]
        \centering
        \includegraphics[width=1\linewidth]{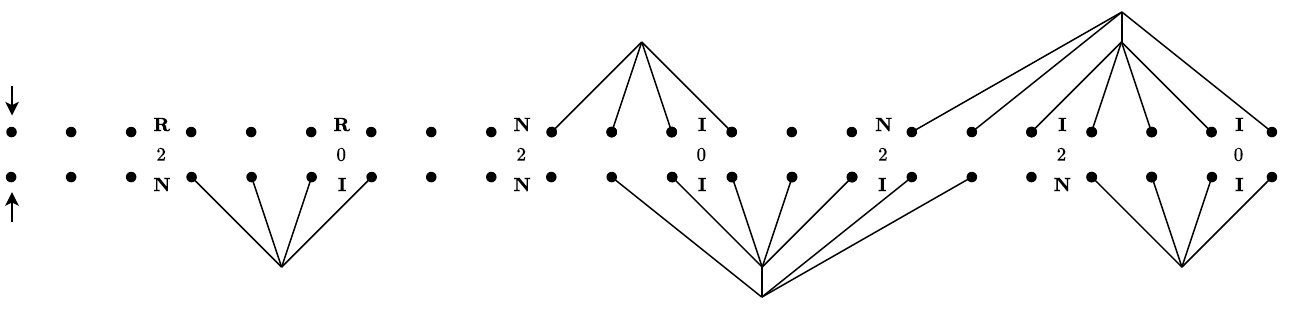}
    \end{figure}
    Notice that $\ell_0(f)=8$ and $\ell_1(f)=7$,  so $\ell(f)=15$. One minimum-length word for $f$ is:
    $$ x_0^{-1} x_1^{-1} x_0^{-2} x_1 x_3 x_0^{-1} x_1^{-1} x_0^2 x_1 x_3^{-1} x_2^{-1} x_0^2 .$$

    Interestingly, note that we were able to avoid crossing the marked space labelled $\rule{0pt}{24pt} \left[ \begin{matrix} \mathbf{R} \\ \mathbf{I} \end{matrix} \right]$ by constructing the caret in the bottom forest later in the process. Thus, marked spaces labelled   $\rule{0pt}{24pt} \left[ \begin{matrix} \mathbf{R} \\ \mathbf{I} \end{matrix} \right]$ carry a weight of 0, as do  $\rule{0pt}{24pt} \left[ \begin{matrix} \mathbf{I} \\ \mathbf{R} \end{matrix} \right]$ and  $\rule{0pt}{24pt} \left[ \begin{matrix} \mathbf{I} \\ \mathbf{I} \end{matrix} \right]$. The same cannot be said for the marked spaces of right-sided elements with any other combination of labels, as all of them need to be crossed twice: once when moving right, away from the pointers, and another time when returning left. 
    
\end{exa}

\newpage

\begin{exa}
    Let $f \in F(5)$ be the left-sided element with the following forest diagram.

    \begin{figure}[H]
        \centering
        \includegraphics[width=1\linewidth]{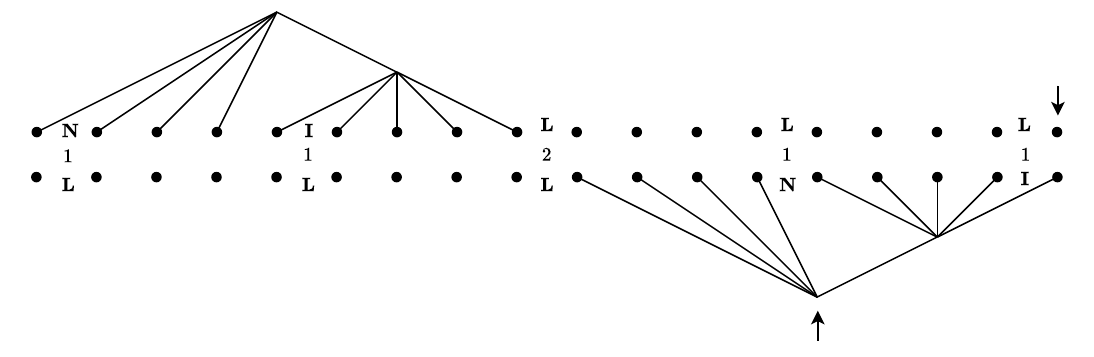}
    \end{figure}

    We have that $\ell_0(f)=6$ and $\ell_1(f)=4$. Therefore, $f$ has length 10. A possible minimum-length word for $f$ is:
    $$ x_0 x_1^{-1} x_0 x_1^{-1} x_0 x_4 x_0^{-1} x_4 x_0^{-2} .$$
    
   In this case, note that all interior marked spaces of $f$ have a weight of 1. This is because $x_i$, for any $i \in \{1, \dots n-1 \} $, builds carets to the right of the top pointer. Therefore, if a marked space is interior, then we must have first crossed that marked space on our way to build the caret that will enclose it. Also, notice how by constructing the non-trivial tree from the bottom forest later in the process, we were able to avoid any extra crossings of the two marked spaces closest to the pointers.
\end{exa}

\begin{exa}
    Let $f \in F(3)$ be the element with the following forest diagram.

    \begin{figure}[H]
        \centering
        \includegraphics[width=1\linewidth]{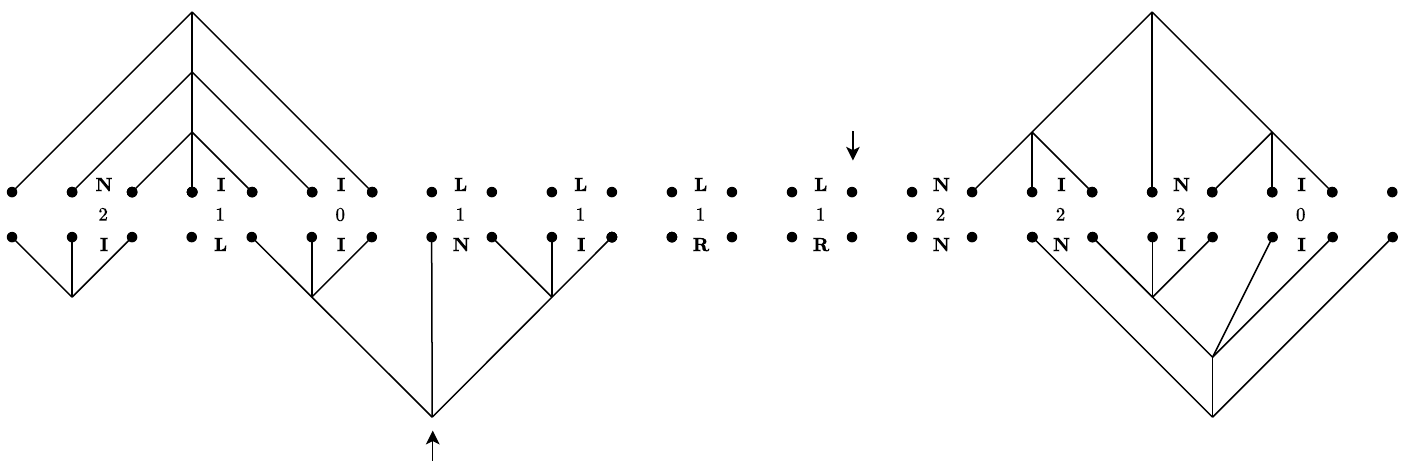}
    \end{figure}

    Then, $\ell_0(f)=13$ and $\ell_1(f)=13$, so $f$ has length $26$. One minimum-length word for $f$ is: 

    $$ x_0^{-1} x_1^{2} x_0^{-2} x_1^{} x_0^{} x_1^{-2} x_0^{} x_2^{-1} x_0^{4} x_1^{-1} x_0^{} x_1^{} x_2^{} x_0^{-1} x_1^{} x_0^{} x_1^{-1} x_0^{-1} x_1^{-2}.$$

    This is the first example with marked spaces labelled $\left[ \begin{matrix} \mathbf{L} \\ \mathbf{R} \end{matrix} \right]$, which we can see only need to be crossed once.  
\end{exa}

\section{Proof of the Length Formula} \label{proof-section}
Proving the generalised length formula from Theorem \ref{genthm} basically follows the same structure as the proof for the case of $F$ found in \cite[Section 4.3]{belk2005forest}. Essentially, our task is to show that the proposed length formula satisfies the conditions of Lemma \ref{lemma} (from \cite[Lemma 2.1]{fordham2009minimal}) and hence the result follows. 
\begin{lem} \label{lemma}
    Let $G$ be a group with generating set $S$, and let $\ell: G \rightarrow \mathbb{N}$ be  a function. Then $\ell$ is the length function for $G$ with respect to $S$ if and only if:
    \begin{enumerate}
        \item $\ell(e)=0$, where $e$ is the identity of $G$.
        \item $|\ell(sg)-\ell(g)| \leq 1$ for all $g \in G$ and $s \in S$.
        \item If $g \in G \setminus \{e\}$, there exists an $s \in S \cup S^{-1}$ such that $\ell(sg) < \ell(g)$.
    \end{enumerate}
\end{lem}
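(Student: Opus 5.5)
The plan is to prove both directions directly from the definition of the word length $|g|_S$, the minimal length of a word in $S\cup S^{-1}$ representing $g$.

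\emph{Necessity.} Suppose $\ell=|\cdot|_S$. Then $\ell(e)=0$, since the empty word represents $e$. For condition (2), take $g\in G$ and $s\in S$. Appending $s$ to a minimal word for $g$ gives $\ell(sg)\le \ell(g)+1$. Applying the same argument to $s^{-1}$ and $sg$ gives $\ell(g)=\ell(s^{-1}sg)\le \ell(sg)+1$. For condition (3), let $g\neq e$ and take a minimal word $w=s_1s_2\cdots s_k$ for $g$, with each $s_j\in S\cup S^{-1}$. Since $g\ne e$ we have $k\ge1$. Put $s=s_1^{-1}$. Then $sg$ is represented by $s_2\cdots s_k$, so $\ell(sg)\le k-1<\ell(g)$.

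\emph{Sufficiency.} Suppose $\ell$ satisfies (1)--(3). We show $\ell(g)=|g|_S$ for all $g\in G$.

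First we show $\ell(g)\le |g|_S$, by induction on $|g|_S$. If $|g|_S=0$, then $g=e$ and $\ell(e)=0$ by (1). Otherwise write $g=s h$ with $s\in S\cup S^{-1}$ and $|h|_S=|g|_S-1$. Condition (2) is stated only for $s\in S$, but it also holds for $s^{-1}$ with $s\in S$: applying (2) to $s$ and the element $s^{-1}g'$ gives $|\ell(g')-\ell(s^{-1}g')|\le1$. So in either case $\ell(g)\le \ell(h)+1\le |h|_S+1=|g|_S$.

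Next we show $|g|_S\le \ell(g)$, by induction on $\ell(g)\in\mathbb{N}$. If $\ell(g)=0$, then $g=e$: otherwise (3) would give some $s$ with $\ell(sg)<0$, which is impossible in $\mathbb{N}$. Hence $|g|_S=0$. If $\ell(g)>0$, then again $g\ne e$ by (1), so (3) gives $s\in S\cup S^{-1}$ with $\ell(sg)<\ell(g)$. By the inductive hypothesis $|sg|_S\le \ell(sg)\le \ell(g)-1$. Since $g=s^{-1}(sg)$, we get $|g|_S\le |sg|_S+1\le \ell(g)$.

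Combining the two inequalities gives $\ell=|\cdot|_S$. The proof is routine throughout. The only point needing care is that (2) is stated only for $s\in S$, and it has to be extended to inverses of generators, as done in the first half of the sufficiency argument.
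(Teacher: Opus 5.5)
Your proof is correct and complete. Necessity comes from adding or cancelling a first letter. Sufficiency comes from your two inductions: $\ell \le |\cdot|_S$ from (1) and (2), with (2) extended to $S^{-1}$ as you do, and $|\cdot|_S \le \ell$ from (1), (3) and the well-ordering of $\mathbb{N}$.

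The paper gives no proof of its own here. It imports the lemma from Fordham and Cleary \cite{fordham2009minimal}, and your argument is the standard one behind it. The only slip is the word ``appending'' in the necessity part, which should be ``prepending'', since $sg$ is represented by $s$ followed by a word for $g$.
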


For our purposes, let $\ell$ denote the function defined on $F(n)$ by Theorem \ref{genthm}. It is straightforward to see that $\ell$ satisfies the first condition. To show that $\ell$ satisfies the other two conditions we only need to gather information on how left-multiplication by generators affects the function $\ell$.

\begin{term}
    Let $f \in F(n)$, the \emph{current tree} of $f$ is the tree in the forest diagram of $f$ indicated by the top pointer. The \emph{right space} of $f$ is the leftmost exterior marked space to the right of the current tree, and the \emph{left space} of $f$ is the rightmost exterior marked space to the left of the current tree. 
\end{term}

\begin{prop} \label{prop1}
    If $f \in F(n)$, then $\ell(x_0 f)=\ell(f) \pm 1$. Specifically, $\ell(x_0 f) < \ell(f)$ unless one of the following conditions holds:
    \begin{enumerate}
        \item $x_0 f$ has larger support than $f$.
        \item The right space of $f$ has bottom label \textbf{L}, and left-multiplication by $x_0$ does not remove this space from the support.
        \item The right space of $f$ is labelled $\left[ \begin{matrix} \mathbf{R} \\ \mathbf{I} \end{matrix} \right]$.
    \end{enumerate}
\end{prop}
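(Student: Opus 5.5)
Left-multiplication by $x_0$ leaves the reduced forest diagram unchanged apart from the top pointer: the carets, the forests and the leaf bijection are the same, and the diagram stays reduced. So $\ell_1(x_0f)=\ell_1(f)$, and the proof reduces to comparing $\ell_0(x_0 f)$ with $\ell_0(f)$. The numbering of spaces is taken relative to the top pointer, and the pointer moves $n-1$ trees to the right. Since $n-1$ is exactly the spacing of marked spaces, the set of marked spaces, viewed as physical spaces of the diagram, is unchanged, and only the top labels \textbf{L}/\textbf{R} of the exterior marked spaces that the pointer passes over can change. The first step is to identify which spaces these are. Between the current tree and the tree $n-1$ places to its right lie exactly $n-1$ exterior spaces, and exactly one of them is marked. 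Because the pointer starts on a tree, that marked space is the right space of $f$. Interior spaces and all bottom labels are unaffected, and so is every other top label, including \textbf{N}, which depends only on where carets sit. The exception is when the right space of $f$ itself carries top label \textbf{N}.

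The right space of $f$ changes from being to the right of the top pointer to being to the left of it. Its top label therefore becomes \textbf{L} whatever it was before; in particular an \textbf{N} label is overridden, since \textbf{L} has priority. The old top label was \textbf{R} or \textbf{N}, or possibly nothing if the space was outside the support. We then check the weight change in Table \ref{tab:weights} case by case.

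\begin{itemize}
\item Bottom label \textbf{L}: a top label \textbf{R} or \textbf{N} has weight $1$ and top label \textbf{L} has weight $2$, giving $+1$. This is condition (2), with the proviso about the support.
\item Bottom label \textbf{N} or \textbf{R}, top label \textbf{R} or \textbf{N}: the weight goes from $2$ to $1$, giving $-1$.
\item Bottom label \textbf{I}, top label \textbf{N}: the weight goes from $2$ to $1$, giving $-1$.
\item Bottom label \textbf{I}, top label \textbf{R}: the weight goes from $0$ to $1$, giving $+1$. This is condition (3).
\end{itemize}

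Every case changes $\ell$ by exactly $\pm 1$, as claimed.

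The main obstacle is the \emph{support} bookkeeping, that is, deciding which marked spaces contribute any weight at all. I would take the support to be the smallest range of spaces containing both pointers and all carets. Outside the support, spaces with labels $\left[\begin{smallmatrix}\mathbf{R}\\\mathbf{R}\end{smallmatrix}\right]$ or $\left[\begin{smallmatrix}\mathbf{L}\\\mathbf{L}\end{smallmatrix}\right]$ contribute nothing. There are two cases to handle.

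\begin{itemize}
\item If the right space of $f$ lies outside the support, it has no weight, and after the move it lies between the pointers, so it is in the support. It then has labels $\left[\begin{smallmatrix}\mathbf{L}\\\mathbf{R}\end{smallmatrix}\right]$ and weight $1$, so $\ell$ increases by $1$. This is condition (1). Conversely, whenever $x_0 f$ has larger support than $f$, the space newly added to the support is exactly this one.
\item If the top pointer is at the right end of the support and the bottom label is \textbf{L}, the move can push the space out of the support. The old weight was $1$ and the new contribution is $0$, a decrease. This explains the proviso ``does not remove this space from the support'' in condition (2).
\end{itemize}

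I would also check that the support shrinks only in this way, and never at the left end. With these boundary cases treated alongside the table cases, the case analysis is complete and gives both the $\pm1$ statement and the exact list of exceptions.
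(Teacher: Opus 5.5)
Your route is the paper's. $\ell_1$ is unchanged, only the right space of $f$ can change label (its top label becomes \textbf{L}), and the rest is a table look-up plus support effects. All four of your table cases are correct.

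The gap is in the support-shrinking bullet, which is the step you yourself flag as the main obstacle. Moving the top pointer right can only extend the support at its right end or shrink it at its \emph{left} end, never the other way round.
\begin{itemize}
\item If the top pointer is at the right end of the support, the right space of $f$ lies outside the support and is unweighted. That is your first bullet, so the space cannot have had weight $1$.
\item Bottom label \textbf{L} puts the bottom pointer to the right of that space, hence to the right of the top pointer. So the configuration you describe (top pointer at the right end, bottom label \textbf{L}) never occurs.
\item Your planned check that the support ``never shrinks at the left end'' is false, since that is the only place it can shrink.
\end{itemize}
As a result you never prove the implication actually needed: if $x_0$ removes the right space from the support, its old weight was exactly $1$.

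The fix, which is the paper's Case 2, goes as follows. The right space is removed exactly when the bottom pointer and every caret of both forests lie to its right, so that only the old top pointer lay to its left. It is then exterior in both forests. It lies right of the top pointer, so its top label is \textbf{R} or \textbf{N}, and left of the bottom pointer, so its bottom label is \textbf{L}. Hence it had weight $1$ and becomes unweighted. The only other spaces leaving the support are the unmarked exterior spaces between the (necessarily trivial) trees the pointer crosses. So $\ell$ drops by exactly $1$, which is why condition (2) needs its proviso.

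A smaller point concerns why the set of marked spaces is invariant. This does not follow from ``$n-1$ is the spacing'', because the pointer crosses every leaf of the $n-1$ trees it passes, not just $n-1$ spaces. What you need is that an $n$-ary tree with $c$ carets has $1+c(n-1)$ leaves, so the numbering shifts by a multiple of $n-1$. The same count shows that the only marked exterior space crossed is the one immediately left of the new current tree, i.e.\ the right space of $f$.

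Once repaired, your split by the status of the right space (outside the support, kept in it, removed from it) is actually slightly sharper than the paper's larger/smaller/same-support split. For $n\ge 3$ the support can shrink by unmarked spaces only while the right space stays in it. An example is when the current tree is trivial, the next tree carries a caret, and everything else lies further right. That case belongs to your table analysis, and your organisation handles it automatically.
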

\begin{proof}
    Clearly, $\ell_1(x_0 f) = \ell_1(f)$. As for $\ell_0$, note that the only change in labelling may come from the right space of $f$. Let us examine the three possible cases: 

    \textit{Case 1:} \quad Suppose $x_0 f$ has larger support than $f$. This implies that the right space of $f$ is originally unlabelled, but then has label $\rule{0pt}{24pt} \left[ \begin{matrix} \mathbf{L} \\ \mathbf{R} \end{matrix} \right]$ in $x_0 f$, which corresponds to a weight of 1. Therefore, $\ell_0(x_0 f)= \ell_0(f)+1$.

    \textit{Case 2:} \quad Suppose  $x_0 f$ has smaller support than $f$. Then the right space of $f$ has label $\left[ \begin{matrix} \mathbf{L} \\ \mathbf{R} \end{matrix} \right]$, but then becomes unlabelled in $x_0 f$. Therefore, $\ell_0(x_0 f)=\ell_0(f)-1$.

    \textit{Case 3:} \quad Suppose $x_0 f$ has the same support as $f$. This means that the right space of $f$ has top label \textbf{N} or \textbf{R}, but top label \textbf{L} in $x_0 f$.  If we look at the relevant rows of the weight table: 
    \begin{table}[H]
    \centering
    \scalebox{1.1}{
     \renewcommand{\arraystretch}{1.3} 
    \begin{tabular}{c| c c c c}  

 & \textbf{L} & \textbf{N} & \textbf{R} & \textbf{I}  \\ \hline
\textbf{L} & 2 & 1 & 1 & 1 \\ 
\textbf{N} & 1 & 2 & 2 & 2 \\ 
\textbf{R} & 1 & 2 & 2 & 0 \\ 

\end{tabular}}
\end{table}
    we notice that each entry of the \textbf{N} and \textbf{R} rows differs from the corresponding entry of the \textbf{L} row by exactly one. Furthermore, moving from an \textbf{N} or \textbf{R} row only increases the weight when in the \textbf{L} column or when starting at $\rule{0pt}{24pt}\left[ \begin{matrix} \mathbf{R} \\ \mathbf{I} \end{matrix} \right]$.
\end{proof}

\begin{cor} \label{cor1}
    Let $f \in F(n)$. Then $\ell(x_0^{-1} f) < \ell(f)$ if and only if one of the following conditions holds:

    \begin{enumerate}
        \item $x_0^{-1} f$ has smaller support than $f$.
        \item The left space of $f$ has label $\left[ \begin{matrix} \mathbf{L} \\ \mathbf{L} \end{matrix} \right]$.     
        \item The left space of $f$ has label $\left[ \begin{matrix} \mathbf{L} \\ \mathbf{I} \end{matrix} \right]$, and the current tree is trivial. 
    \end{enumerate}
\end{cor}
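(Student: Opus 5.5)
The plan is to deduce Corollary \ref{cor1} directly from Proposition \ref{prop1} by setting $g = x_0^{-1} f$. Then $f = x_0 g$. Proposition \ref{prop1} gives $\ell(f) = \ell(g) \pm 1$, so $\ell(x_0^{-1}f) < \ell(f)$ holds exactly when $\ell(x_0 g) > \ell(g)$. By Proposition \ref{prop1}, this happens exactly when one of its three conditions holds for $g$. The whole proof therefore reduces to translating each of those conditions, stated for $g$, into a condition on $f$. Under this translation the right space of $g$ becomes the left space of $f$: moving the top pointer $n-1$ trees to the right carries the leftmost exterior marked space to the right of $g$'s current tree to the rightmost exterior marked space to the left of $f$'s current tree. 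I would check this carefully, including when the current tree of $g$ is trivial. Marked spaces are renumbered by exactly $n-1$, so markedness is preserved.

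The translation then goes condition by condition.
\begin{enumerate}
\item Condition (1) of Proposition \ref{prop1}, that $x_0 g = f$ has larger support than $g$, is literally condition (1) here: $x_0^{-1}f$ has smaller support than $f$.
\item For condition (2), only top labels depend on the pointer position, so the bottom labels of $g$ and $f$ agree. The right space of $g$ has bottom label \textbf{L} and is not removed from the support by $x_0$, so in $f$ it is an exterior marked space to the left of the top pointer. Hence its top label is \textbf{L}, giving $\left[ \begin{matrix} \mathbf{L} \\ \mathbf{L} \end{matrix} \right]$. Conversely, if the left space of $f$ is labelled $\left[ \begin{matrix} \mathbf{L} \\ \mathbf{L} \end{matrix} \right]$, then it lies in the support of both $f$ and $g$ and has bottom label \textbf{L}.
\item For condition (3), the right space of $g$ is labelled $\left[ \begin{matrix} \mathbf{R} \\ \mathbf{I} \end{matrix} \right]$. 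In $f$ its top label becomes \textbf{L}, so the left space of $f$ is $\left[ \begin{matrix} \mathbf{L} \\ \mathbf{I} \end{matrix} \right]$.
\end{enumerate}

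The main obstacle is matching the extra hypothesis ``the current tree of $f$ is trivial'' in condition (3) with the distinction between top labels \textbf{R} and \textbf{N} in $g$. Top label \textbf{R} rather than \textbf{N} means the space is not the rightmost marked space to the left of a caret. Equivalently, the $n-1$ trees immediately to its right, the first of which is the current tree of $f$, do not begin with a nontrivial tree at the relevant position. Here I expect to use that this space is exterior and marked, so the next marked space is $n-1$ spaces later. The space is therefore labelled \textbf{N} precisely when the next tree to its right, namely the current tree of $f$, carries a caret.

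I would also need to recheck the Case 3 table reasoning from Proposition \ref{prop1} in reverse. Passing from top label \textbf{L} back to \textbf{N} over bottom label \textbf{I} changes the weight from $1$ to $2$, an increase, so moving left decreases it; this is consistent with the claim. With label \textbf{R} over bottom label \textbf{I}, the weight goes from $0$ in $g$ to $1$ in $f$, so $x_0^{-1}$ decreases $\ell$, again as claimed. Finally, I would confirm that no other label combination for the left space of $f$ yields a decrease. This follows from the ``only if'' direction of Proposition \ref{prop1}, since the three conditions there are exhaustive.
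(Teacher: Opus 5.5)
Your route is the intended one: apply Proposition \ref{prop1} to $g=x_0^{-1}f$ and translate its three conditions (the paper states the result as an immediate corollary and gives no separate argument). Your identification of the right space of $g$ with the left space of $f$ is correct, since both are space $0$ of $f$, and so are your translations of conditions (1) and (2).

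The gap is exactly the step you flagged. For $n\ge 3$ it is false that the right space of $g$ has top label \textbf{N} precisely when the current tree of $f$ is non-trivial. Suppose the current tree of $f$ and the next $j-1$ trees are trivial, and the $j$-th tree is not, with $1\le j\le n-2$. Then the only spaces between space $0$ and that tree are the exterior spaces $1,\dots,j$, and these are unmarked. So space $0$ is still the rightmost marked space to the left of that tree's root caret, and it gets top label \textbf{N} in $g$. Hence its top label in $g$ is \textbf{R} if and only if the current tree of $f$ and the $n-2$ trees to its right are all trivial. The paper reads the label this way itself in the depth-two proof: there an \textbf{R} top label on the right space of $f$ forces the current tree of $x_0f$ and the next $n-2$ trees to be trivial. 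So condition (3) of Proposition \ref{prop1} really translates to this: the left space of $f$ is $\left[\begin{matrix}\mathbf{L}\\ \mathbf{I}\end{matrix}\right]$ and all $n-1$ of those trees are trivial.

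This is not just a hole in your argument. With only ``the current tree is trivial'', the ``if'' direction of the statement fails for $n\ge 3$, so no proof can close it. Take $n=3$ and $f=x_2x_0x_2^{-1}x_0^{-1}$. Its reduced diagram is as follows:
\begin{itemize}
\item the current tree $b$ is trivial, and the next tree is a single caret with leaves $c_1,c_2,c_3$;
\item the bottom forest has one caret over the leaves $a,b,c_1$, where $a$ is the trivial tree just left of $b$;
\item the bottom pointer is at $c_2$.
\end{itemize}
The space between $a$ and $b$ is $\left[\begin{matrix}\mathbf{L}\\ \mathbf{I}\end{matrix}\right]$, and the space between $c_1$ and $c_2$ is $\left[\begin{matrix}\mathbf{I}\\ \mathbf{L}\end{matrix}\right]$. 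So $\ell(f)=1+1+2=4$. Conditions (1) and (2) fail, while (3) as printed holds.

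In $x_0^{-1}f$, the space between $a$ and $b$ gets top label \textbf{N}, because the only space between it and the caret (between $b$ and $c_1$) is unmarked. Its weight therefore rises from $1$ to $2$, and $\ell(x_0^{-1}f)=5$. A direct check also shows $x_0^{-1}f$ has no word of length $3$.

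Your closing ``reverse check'' actually displays this failure. Passing from $\left[\begin{matrix}\mathbf{L}\\ \mathbf{I}\end{matrix}\right]$ to $\left[\begin{matrix}\mathbf{N}\\ \mathbf{I}\end{matrix}\right]$ raises the weight, so $x_0^{-1}$ \emph{increases} the length. That contradicts your ``so moving left decreases it''.

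Condition (3) should be amended to require that the current tree and the $n-2$ trees to its right are trivial; with that change your translation goes through. A minor point: $x_0$ shifts the numbering by the number of leaves in the $n-1$ trees crossed. That is a multiple of $n-1$, not necessarily $n-1$ itself, though markedness is still preserved.
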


\begin{prop} \label{prop2}
    Let $f \in F(n)$. If left-multiplying $f$ by $x_i$, where $i \in \{1,2,\dots, n-1\}$, cancels a caret from the bottom forest, then $\ell(x_i f) = \ell(f) -1$.
\end{prop}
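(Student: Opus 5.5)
We first pin down when left-multiplying by $x_i$ cancels a caret from the bottom forest. By the description of the generators, $x_i$ attaches an $n$-caret to the roots of the $n$ consecutive trees starting at the $(i-1)$-th tree to the right of the current tree. A cancellation with the bottom forest happens exactly when all $n$ of these top trees are trivial (single leaves) and their leaves are the $n$ leaves of a single bottom caret; the new top caret and that bottom caret then form an opposing pair. After reducing, both disappear, and the $n$ trivial top trees merge into one trivial tree. So the reduced diagram of $x_i f$ is the reduced diagram of $f$ with one bottom caret $C$ deleted and the $n$ leaves of $C$ collapsed to one. Deleting $C$ removes one caret, so $\ell_1(x_i f)=\ell_1(f)-1$. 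It therefore suffices to show $\ell_0(x_i f)=\ell_0(f)$.

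The $n$ trees begin at the $(i-1)$-th tree to the right of the current tree, and $1\le i\le n-1$. Hence, in the space numbering with $0$ immediately left of the top pointer, the $n-1$ spaces enclosed by $C$ are numbered $i,\dots,i+n-2$. Exactly one of these is a multiple of $n-1$, so $C$ encloses exactly one marked space $s$, as every caret does. Collapsing the leaves of $C$ removes $n-1$ consecutive spaces, so the numbering of every space further right shifts by $n-1$ and markedness is preserved. The top pointer does not move: the only exception, $i=1$, moves it to the new root, and in the reduced diagram that root is just the collapsed leaf at the same position. So the top labels of all marked spaces other than $s$ are unchanged. The bottom labels of other marked spaces are also unchanged, with one caveat treated below. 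Thus the comparison reduces to the labels of $s$ before and after, and to the possible relabelling of a neighbouring marked space.

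Before the move, $s$ has top label \textbf{R}. It is exterior between trivial top trees and lies to the right of the pointer. It cannot be \textbf{N}, because the trees to its right up to the next marked space are trivial, namely the remaining leaves of $C$. Its bottom label is \textbf{I} or \textbf{N}, according to whether $C$ has a child to the right of $s$. But $C$ has no children at all, since its leaves must be bare for the cancellation, so the bottom label is \textbf{I}. Table~\ref{tab:weights} gives $\left[\begin{matrix}\mathbf{R}\\ \mathbf{I}\end{matrix}\right]$ weight $0$. After the move, $s$ no longer exists as a separate space, contributing $0$.

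The main obstacle is the caveat. Removing $C$ could change the bottom label of another marked space in two ways. First, if $C$ was a child of a parent caret $P$, then the marked space enclosed by $P$ may lose its \textbf{N} label, becoming \textbf{I}. Second, if $C$ was a root caret, the exterior marked space just left of $C$ may lose its ``rightmost marked space left of a caret'' \textbf{N} label, becoming \textbf{R} or \textbf{L}. I will check these against the table. In the first case, the top label of $P$'s marked space is \textbf{R} or \textbf{I} (it lies right of the pointer), and the weights go $\left[\begin{matrix}\mathbf{R}\\ \mathbf{N}\end{matrix}\right]=2\to\left[\begin{matrix}\mathbf{R}\\ \mathbf{I}\end{matrix}\right]=0$, which would break the count. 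I therefore expect the correct argument to exploit the support condition or the reducedness of the diagram to rule out, or compensate for, such configurations. If that fails, I would instead bound $\ell_0(x_if)\le\ell_0(f)$ and combine it with $\ell(x_if)\ge\ell(f)-1$ from the triangle inequality of Lemma~\ref{lemma}. This last step is where I anticipate the real work: verifying, case by case on the neighbouring labels, that the total weight does not increase.
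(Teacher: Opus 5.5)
Your proposal does not prove the statement. The first half matches the paper's proof: $\ell_1$ drops by one, and the marked space $s$ enclosed by the cancelled bottom caret $C$ is taken to be $\left[\begin{matrix}\mathbf{R}\\ \mathbf{I}\end{matrix}\right]$, of weight $0$, and disappears. The decisive step, that no other weight changes, is left open. Your fallback is circular: $\ell$ is the formula-defined function, and Proposition~\ref{prop2} is one of the ingredients used to verify condition (2) of Lemma~\ref{lemma} for it, so you cannot invoke $\ell(x_if)\ge\ell(f)-1$. The paper closes this step by asserting that the only other space whose label can change is the left space of $f$. That space has top label \textbf{L}, and its bottom \textbf{L}-status is unaffected. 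The paper never considers the two situations you raise.

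Your worry is justified, and with the labels as defined the gap cannot be filled, because the statement fails there. Take $n=3$ and $f=x_2^{-1}x_1x_2^{-1}$. Its reduced diagram has leaves $a,b,c,d_1,d_2,d_3$:
\begin{itemize}
\item the top forest is one caret $T$ on $a,b,c$, carrying the top pointer;
\item the bottom forest is a caret $P$ on $b,c,d$ (bottom pointer on the leaf $a$), with a child $C$ on $d$ whose leaves are $d_1,d_2,d_3$.
\end{itemize}
Left-multiplying by $x_2$ cancels $C$ and gives $x_1x_2^{-1}$. The marked space $b\,|\,c$ is $\left[\begin{matrix}\mathbf{I}\\ \mathbf{N}\end{matrix}\right]$ in $f$, of weight $2$, because $P$ has a child to its right. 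In $x_2f$ it is $\left[\begin{matrix}\mathbf{I}\\ \mathbf{I}\end{matrix}\right]$, of weight $0$. The space $d_1\,|\,d_2$ is $\left[\begin{matrix}\mathbf{R}\\ \mathbf{I}\end{matrix}\right]$. So the formula gives $5$ and then $2$, a drop of $3$, although the true lengths are $3$ and $2$.

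Your second case also occurs, under the stated exterior-\textbf{N} rule. Take $f=x_2^{-1}x_1$, with $T$ on $a,b,c$ as before and a root bottom caret on the next three leaves. The space $b\,|\,c$ is the rightmost marked space left of that bottom caret, so it is $\left[\begin{matrix}\mathbf{I}\\ \mathbf{N}\end{matrix}\right]$. In $x_2f=x_1$ it becomes $\left[\begin{matrix}\mathbf{I}\\ \mathbf{R}\end{matrix}\right]$, and the formula values are $4$ and $1$.

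Your argument that $s$ cannot have top label \textbf{N} is also wrong for $i<n-1$; the paper's ``not enough space'' remark makes the same claim. Only $i$ leaves of $C$ lie to the right of $s$, but $n-1$ trees separate $s$ from the next exterior marked position, so a top caret can sit in between. For example, for $f=x_1^{-1}x_2$ in $F(3)$, $s$ is the rightmost marked space left of the caret built by $x_2$. It is therefore $\left[\begin{matrix}\mathbf{N}\\ \mathbf{I}\end{matrix}\right]$, of weight $2$, and the formula drops from $4$ to $1$.

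So no case analysis will give $\ell_0(x_if)=\ell_0(f)$. The missing step in your proposal is also missing, and false, in the paper's argument. The \textbf{N}-labelling rules would have to be changed.
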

\begin{proof}
    Clearly, as we are deleting a caret, $\ell_1(x_i f)=\ell_1(f)-1$. Our task is to show that $\ell_0$ remains unchanged. Note that the right space of $f$ is exterior by definition and it can not be the first marked space to the left of a caret (as then there wouldn't be enough ``space'' for $x_i$ to cancel a caret in the bottom forest), thus its top label must be \textbf{R} and with a similar argument we see that its bottom label must be \textbf{I}, so its assigned weight is 0. Furthermore, note that the right space of $f$ will be ``destroyed'' by the deletion of the bottom caret. Thus, $\ell_0$ will be unchanged by this. 

    The only other space that may be affected is the left space of $f$. If this space is not in the support of $f$, it remains unlabelled in $x_i f$. Otherwise, note that it must have top label \textbf{L} in both $f$ and $x_i f$. The relevant row of the weight table is: 

        \begin{table}[H]
    \centering
    \scalebox{1.1}{
     \renewcommand{\arraystretch}{1.3} 
    \begin{tabular}{c| c c c c}  

 & \textbf{L} & \textbf{N} & \textbf{R} & \textbf{I}  \\ \hline
\textbf{L} & 2 & 1 & 1 & 1

\end{tabular}}
\end{table}

Therefore, the only property that could affect $\ell_0$ is whether or not the bottom label is \textbf{L}, which remains unaffected by the deletion of the caret. 

\end{proof}

\begin{cor}\label{hmm}
    Let $f \in F(n)$. If left-multiplying $f$ by $x^{-1}_i$, where $i \in \{1,\dots, n-1\}$, builds a caret in the bottom forest, then $\ell(f) < \ell(x_i^{-1} f)$.
\end{cor}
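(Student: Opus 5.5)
Corollary \ref{hmm} follows from Proposition \ref{prop2} by inverting the roles of $f$ and $x_i^{-1}f$. Set $g = x_i^{-1} f$. Suppose that left-multiplying $f$ by $x_i^{-1}$ builds a caret in the bottom forest. By the description of $x_i^{-1}$ in Section \ref{section1}, this happens exactly when the $(i-1)$-th tree to the right of the current tree of $f$ is trivial. In that case the negative caret falls through: it is attached to the leaf beneath that tree, and $n-1$ new trivial trees are created to its right in the top forest.

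First I will check that the reduced forest diagram of $g$ is the one just described. That is, the new bottom caret does not oppose a top caret, since the trees above its leaves are trivial. Hence $f = x_i g$. I will then read off the diagram for $x_i g$ from the description of $x_i$. Left-multiplying $g$ by $x_i$ attaches an $n$-caret to the roots of the $n$ consecutive trivial trees starting at the $(i-1)$-th tree to the right of the current tree. This top caret sits directly over the newly built bottom caret, so the pair cancels upon reduction. In other words, left-multiplying $g$ by $x_i$ cancels a caret from the bottom forest.

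Proposition \ref{prop2} then applies to $g$ and gives $\ell(x_i g) = \ell(g) - 1$, that is, $\ell(f) = \ell(x_i^{-1} f) - 1 < \ell(x_i^{-1} f)$.

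The only point needing care is the first step. I must confirm that the pointer positions and the indexing of the $(i-1)$-th tree are consistent under the two operations, including the case $i=1$, where the pointer movement rules differ. I would check directly from the two propositions describing $x_i$ and $x_i^{-1}$ that neither operation moves the top pointer in this situation. For $i=1$ the current tree itself is trivial, so $x_1$ moves the pointer to the root of the caret formed from the current tree, and this caret is cancelled immediately. In every case the caret attached by $x_i$ is therefore exactly the one opposing the caret dropped by $x_i^{-1}$.
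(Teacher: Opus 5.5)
Your argument is the paper's own. The paper states Corollary~\ref{hmm} without a separate proof, as Proposition~\ref{prop2} applied to $g=x_i^{-1}f$, and your checks (that the diagram of $g$ is reduced, and that $x_i$ with the pointer bookkeeping, including $i=1$, cancels exactly the dropped bottom caret) just make that implicit step explicit. Be aware, though, that the conclusion is only as sound as Proposition~\ref{prop2}, and that proposition appears to fail for $n\ge 3$: its proof claims the right space cannot be the first marked space to the left of a caret. Take $n=3$ and $g=x_1^{-1}x_2$, whose reduced diagram has the top pointer on trivial leaf $0$, one top caret on leaves $3,4,5$ and one bottom caret on leaves $0,1,2$. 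There marked space $2$ is labelled $\left[\begin{matrix}\mathbf{N}\\ \mathbf{I}\end{matrix}\right]$, so by my computation the formula gives $\ell(g)=4$ (the true length is $2$) while $\ell(x_1g)=\ell(x_2)=1$; the equality $\ell(x_i^{-1}f)=\ell(f)+1$ you derive then fails for the paper's $\ell$, although the corollary's strict inequality still holds in this example.
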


\begin{prop} \label{prop3}
     Let $f \in F(n)$, and suppose that left-multiplying $f$ by $x_i$, where $i \in \{ 1, \dots, n-1\}$, creates a caret in the top forest. Then $\ell(x_i f) = \ell(f) \pm 1$. Specifically, $\ell(x_i f) < \ell(f) $ if and only if the right space of $f$ has label $\rule{0pt}{24pt}\left[ \begin{matrix} \mathbf{R} \\ \mathbf{R} \end{matrix} \right]$.
\end{prop}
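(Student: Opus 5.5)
We follow the pattern of Propositions \ref{prop1} and \ref{prop2}. Since left-multiplying by $x_i$ creates a new caret in the top forest and cancels nothing, $\ell_1(x_i f)=\ell_1(f)+1$. The work is to show that $\ell_0(x_i f)=\ell_0(f)$ in every case except when the right space of $f$ is labelled $\left[ \begin{matrix} \mathbf{R} \\ \mathbf{R} \end{matrix} \right]$, and that in that case $\ell_0(x_i f)=\ell_0(f)-2$.

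First we locate the labels that can change. The new caret joins the $n$ consecutive trees starting at the $(i-1)$-th tree to the right of the current tree. Its only effect on marked spaces is to turn exterior spaces between these trees into interior ones. Since consecutive marked spaces are $n-1$ spaces apart, exactly one marked space becomes enclosed by the new caret. Because the pointer does not move (or moves to the root of the new caret when $i=1$), every marked space lies on the same side of the pointer as before. So the only candidates for a label change are:
\begin{enumerate}
\item the marked space $s$ swallowed by the new caret, which by the placement of the caret (between the $(i-1)$-th and $(i+n-1)$-th trees) is exactly the right space of $f$;
\item the marked space immediately to the left of the new caret, whose \textbf{N}-status depends on whether it is the rightmost marked space to the left of a caret;
\item interior marked spaces of the newly joined trees, whose \textbf{N}/\textbf{I} label depends only on their own enclosing caret and its children, and hence is unchanged.
\end{enumerate}
The bottom labels are untouched.

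Next we do the case analysis on the top label of $s$, which is \textbf{N} or \textbf{R} since $s$ is exterior and right of the pointer. In $x_i f$ the space $s$ is interior, enclosed by the new caret. Its label there is \textbf{N} if the new caret has a child to the right of $s$, that is, if one of the joined trees to its right is nontrivial, and \textbf{I} otherwise.

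Suppose $s$ had top label \textbf{N} in $f$, so some tree immediately to its right carries a caret. That tree becomes a child to the right of $s$, so $s$ keeps label \textbf{N} and its weight is unchanged. The argument must still cover the corner case $i=1$: there the rightmost marked space left of the new caret, namely the left space, may lose or gain \textbf{N}. We would check that it carries top label \textbf{L} in both $f$ and $x_1 f$, so only the \textbf{L} column matters and its weight is unchanged.

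Suppose instead $s$ had top label \textbf{R}. Then the trees immediately right of $s$ within the caret's span are trivial, so $s$ becomes \textbf{I}. Reading down the \textbf{R} and \textbf{I} columns of Table \ref{tab:weights}:
\begin{itemize}
\item for bottom labels \textbf{L}, \textbf{N}, \textbf{I}, the weights are $1\to1$, $2\to2$ and $0\to0$, so the weight is unchanged;
\item for bottom label \textbf{R}, the weight goes from $2$ to $0$.
\end{itemize}
This gives $\ell(x_i f)=\ell(f)+1$ in all cases except the $\left[ \begin{matrix} \mathbf{R} \\ \mathbf{R} \end{matrix} \right]$ case, where $\ell(x_i f)=\ell(f)+1-2=\ell(f)-1$.

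The main obstacle is the bookkeeping in the first step, in two parts. We must justify that the swallowed marked space is exactly the right space of $f$, by counting spaces modulo $n-1$ against the pointer position for each $i$. We must also rule out an \textbf{N}/\textbf{R} change at the marked space to the left of the new caret when that space lies to the right of the pointer. If that space was the rightmost marked space left of one of the joined trees, it was already \textbf{N} and stays \textbf{N}. Otherwise we would argue it is the left space, which carries top label \textbf{L}. If this counting turns out subtler than expected, the fallback is to enumerate directly the positions of marked spaces relative to the $n$ joined trees.
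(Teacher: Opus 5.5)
Your route matches the paper's: only the right space $s$ can change label, and the weight table does the rest. However, two steps fail, and the first one cannot be repaired by following the paper.

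\textbf{The \textbf{N} case.} For $n\ge 3$, number the current tree as position $0$. Then $s$ is the exterior space after the trees in positions $0,\dots,n-2$. It gets top label \textbf{N} as soon as any tree in positions $n-1,\dots,2n-3$ is nontrivial, because the first such tree's root caret has $s$ as the rightmost marked space to its left. That tree need not be immediately to the right of $s$.

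The caret built by $x_i$ absorbs only positions $n-1,\dots,i+n-2$ to the right of $s$. So if $i<n-1$, those trees are trivial, and a later one is not, then $s$ has no child caret to its right in $x_if$. It therefore becomes \textbf{I}, and its weight drops by $2$ when its bottom label is \textbf{R} or \textbf{I}.

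A concrete counterexample: take $n=3$, $i=1$ and $f=x_0^{-1}x_2x_0=x_4$. Its diagram has one top caret joining the trees in positions $3,4,5$, a trivial bottom forest, and both pointers at the same leaf. The right space of $f$ is labelled $\left[\begin{matrix}\mathbf{N}\\ \mathbf{R}\end{matrix}\right]$ and $\ell(f)=3$. Yet $x_1f=x_1x_4=x_2x_1$ has length $2$. So the ``only if'' half of the statement is false for $n\ge 3$.

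The paper's proof asserts exactly your claim (``it will remain an \textbf{N} in $x_if$''), so it contains the same error. Once this step is fixed, your bookkeeping still gives $\ell(x_if)=\ell(f)\pm1$. The correct criterion is that $\ell(x_if)<\ell(f)$ if and only if one of the following holds:
\begin{enumerate}
\item $s$ is labelled $\left[\begin{matrix}\mathbf{R}\\ \mathbf{R}\end{matrix}\right]$ and lies in the support of $f$;
\item $s$ has top label \textbf{N}, bottom label \textbf{R} or \textbf{I}, and the trees in positions $n-1,\dots,i+n-2$ are all trivial.
\end{enumerate}
The second possibility cannot occur when $i=n-1$, which is why it never arises for $n=2$.

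\textbf{The support.} You never use the fact that weights are assigned only to marked spaces in the support. If $s$ lies outside the support of $f$ (for example $f=e$), it contributes nothing to $\ell_0(f)$. Your lookup ``$2\to 0$'' for bottom label \textbf{R} would then give $\ell(x_1)=-1$.

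The paper treats this as a separate case. There $s$ enters the support with label $\left[\begin{matrix}\mathbf{I}\\ \mathbf{R}\end{matrix}\right]$, of weight $0$, so $\ell$ rises by $1$. You need the same split into ``$s$ in the support'' and ``$s$ outside the support'' before reading the table.
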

\begin{proof}
    Clearly, as $x_i$ creates a new caret, we have that $\ell_1(x_i f) = \ell_1(f)+1$. Also, note that the right space of $f$ is the only space whose labelling might be affected. Let us examine the two possible cases:

    \textit{Case 1:} \quad Let $x_i f $ have larger support than $f$. This implies that the right space of $f$ is originally unlabelled and so it must be labelled $\rule{0pt}{24pt}\left[ \begin{matrix} \mathbf{I} \\ \mathbf{R} \end{matrix} \right]$ in $x_i f$. 

    \textit{Case 2:} \quad Let $x_i f$ have smaller or equal support than $f$. Then the right space of $f$ has label \textbf{N} or \textbf{R}. 
    \begin{itemize} 
        \item If the top label is \textbf{N}, it will remain an \textbf{N} in $x_i f$ as the right space of $f$ will now be enclosed by the caret created by $x_i$ and to the left of a child of that caret. Hence $\ell_0$ is unaffected.
        \item If the top label is \textbf{R}, then it must change to an \textbf{I} in $x_i f$. Examining the relevant rows of the weight table we see that the weight remains unchanged, except if the bottom label is an \textbf{R}, in which case it decreases by 2. 

                \begin{table}[H]
                \centering
                \scalebox{1.1}{
                \renewcommand{\arraystretch}{1.3} 
                \begin{tabular}{c| c c c c}  

 & \textbf{L} & \textbf{N} & \textbf{R} & \textbf{I}  \\ \hline
\textbf{R} & 1 & 2 & \textbf{2} & 0 \\
 \textbf{I} & 1 & 2 & \textbf{0} & 0

\end{tabular}}
\end{table}
    \end{itemize}
\end{proof}

\begin{cor} \label{corrr}
 Let $f \in F(n)$, and suppose that left-multiplying $f$ by $x_i^{-1}$, where $i\in \{1,2, \dots, n-1\}$ removes a caret in the top forest. Then $\ell(x^{-1}_i f) = \ell(f) \pm 1$. Specifically, $\ell(x^{-1}_i f) > \ell(f) $ if and only if the right space of $x_i^{-1}f$ has label $\rule{0pt}{24pt} \left[ \begin{matrix} \mathbf{R} \\ \mathbf{R} \end{matrix} \right]$.
\end{cor}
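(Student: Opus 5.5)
Set $g = x_i^{-1} f$. Left-multiplying $g$ by $x_i$ gives back $f$, and in doing so it re-attaches the removed caret to the top forest of $g$. So we are exactly in the situation of Proposition \ref{prop3}, applied to $g$ in place of $f$. The plan is to check that hypothesis carefully and then read off the conclusion.

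First, I confirm that the hypothesis of Proposition \ref{prop3} holds for $g$.
\begin{itemize}
\item When $x_i^{-1}$ removes the top caret of the $(i-1)$-th tree to the right of the current tree, it leaves $n$ trees: the children of the removed caret.
\item The pointer stays put, except when $i=1$, where it moves to the leftmost child, which is where the pointer was before the removal.
\item Multiplying $g$ by $x_i$ then attaches an $n$-caret to the roots of exactly these $n$ trees, starting at the $(i-1)$-th tree from the current tree. For $i=1$, the pointer returns to the root of the new caret.
\end{itemize}
So $x_i g = f$, and the caret is created in the top forest rather than cancelled against a bottom caret. One point needs checking: reducedness. The removed caret was a top caret of the reduced diagram of $f$, so the diagram for $g$ stays reduced and no cancellation happens. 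The only alternative would be the bottom-cancellation case of Proposition \ref{prop2}, and it does not occur because the rebuilt caret is precisely the one that was removed.

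Second, Proposition \ref{prop3} applied to $g$ gives $\ell(f) = \ell(x_i g) = \ell(g) \pm 1$. It also gives $\ell(f) < \ell(g)$ if and only if the right space of $g$ is labelled $\left[ \begin{matrix} \mathbf{R} \\ \mathbf{R} \end{matrix} \right]$. Since $g = x_i^{-1} f$, this says $\ell(x_i^{-1} f) = \ell(f) \pm 1$, with $\ell(x_i^{-1} f) > \ell(f)$ exactly when the right space of $x_i^{-1} f$ has that label. This is the claim.

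I expect the main obstacle to be the bookkeeping in the first step. We must make sure that the "right space" used in Proposition \ref{prop3} is the right space of $g$, not of $f$. We must also make sure the pointer conventions for $i=1$ match, so that the inverse operation is literally the caret-creation move of Proposition \ref{prop3}. Once that is settled, the result follows by substitution.
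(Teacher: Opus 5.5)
Your proposal is correct and matches the paper: the corollary is stated without proof as an immediate consequence of Proposition \ref{prop3}, applied to $g = x_i^{-1}f$ so that $x_i g = f$, with the conclusion then rewritten in terms of $f$. You also check that removing a top caret from the reduced diagram of $f$ leaves a reduced diagram for $g$, so that $x_i$ acting on $g$ is the caret-creation case rather than the bottom-cancellation case; this is exactly the hypothesis verification the paper leaves implicit.
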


Combining the first parts of Propositions \ref{prop1} and \ref{prop3} and Proposition \ref{prop2} is enough to verify condition (2) of Lemma \ref{lemma}. Also, along the way we have acquired almost enough information to verify condition (3). We finalise this with Theorem \ref{cond3}.

\begin{thm} \label{cond3}
    Let $f \in F(n)$ be a non-identity element with forest diagram $\mathfrak{f}$. We can apply the following procedure to reduce the length of $f$  
    \begin{enumerate}

        \item If left-multiplication by $x_i$, for some $i\in \{1,2, \dots, n-1\}$, removes a caret from the bottom forest of $\mathfrak{f}$, then $\ell(x_i f) < \ell(f)$.
        
        \item Otherwise, if left-multiplication by $x_i^{-1}$, for some $i\in \{1,2, \dots, n-1\}$, removes a caret in the top forest of $\mathfrak{f}$, then either $\ell(x_i^{-1} f) < \ell(f)$, or  $\ell(x_0 f) < \ell(f)$ .

        \item If neither of the two previous conditions hold, then either $\ell(x_0 f) < \ell(f)$ or $\ell(x_0^{-1} f) < \ell(f)$.  
    \end{enumerate}
\end{thm}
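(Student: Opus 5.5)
We treat the three parts of Theorem \ref{cond3} in order, using Propositions \ref{prop1}, \ref{prop2}, \ref{prop3} and Corollaries \ref{cor1}, \ref{corrr}. Part (1) is Proposition \ref{prop2} verbatim. For part (2), suppose no $x_i$ cancels a bottom caret, and some $x_i^{-1}$ removes a top caret. By Corollary \ref{corrr}, either $\ell(x_i^{-1}f)<\ell(f)$ and we are done, or the right space of $g=x_i^{-1}f$ is labelled $\left[\begin{matrix}\mathbf{R}\\ \mathbf{R}\end{matrix}\right]$. In the latter case, removing the caret only changed the enclosed spaces, and the space that becomes the right space of $g$ was, in $f$, an interior space enclosed by the removed caret. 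Its bottom label was \textbf{R}, so it lies in the support but is exterior in the bottom forest to the right of the bottom pointer. We then check the three exceptions of Proposition \ref{prop1} for $f$. Exception (1) fails because the right space of $f$ is already in the support; this needs a short argument that the support extends past the removed caret. Exception (2) fails because the bottom label of the right space of $f$ is \textbf{R}, which comes from the bottom forest being unchanged. Exception (3) needs the right space of $f$ not to be $\left[\begin{matrix}\mathbf{R}\\ \mathbf{I}\end{matrix}\right]$. We get this from the hypothesis of (2): if the right space were $\mathbf{R}$ over $\mathbf{I}$, the bottom tree under it would be a nontrivial bottom caret positioned so that some $x_j$ cancels it, contradicting the failure of (1). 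Hence $\ell(x_0f)<\ell(f)$.

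For part (3), no $x_i$ cancels a bottom caret and no $x_i^{-1}$ removes a top caret. The latter means the current tree and the $n-2$ trees to its right are trivial in the top forest, and in particular the current tree is trivial. Suppose $\ell(x_0f)\ge\ell(f)$, so one of the exceptions of Proposition \ref{prop1} holds. We then show that one of the conditions of Corollary \ref{cor1} holds, splitting by which exception occurs.

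\emph{Exception (1): the support grows.} Then everything to the right of the pointer is outside the support. Since $f\ne e$, the support lies weakly to the left. Here we show that the left space is labelled \textbf{L} on top with bottom label \textbf{L} or \textbf{I}, using trivial current tree for the latter; otherwise moving left shrinks the support.

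\emph{Exception (2): the right space has bottom label \textbf{L}.} Then the bottom pointer is to the right. The top of the left space is \textbf{L}, and the bottom is \textbf{L} or \textbf{I} since the left space also lies left of the bottom pointer, so Corollary \ref{cor1}(2) or (3) applies.

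\emph{Exception (3): the right space is $\left[\begin{matrix}\mathbf{R}\\ \mathbf{I}\end{matrix}\right]$.} This case should be excluded by the failure of condition (1), via the same argument as in part (2): a bottom caret directly under the trivial top trees to the right of the pointer can be cancelled by some $x_i$.

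The main obstacle is making these geometric claims precise. In particular, the step showing that $\left[\begin{matrix}\mathbf{R}\\ \mathbf{I}\end{matrix}\right]$ at the right space, with trivial top trees nearby, forces a cancellable bottom caret must account for the offset of $n-1$ marked spaces relative to where $x_i$ attaches carets. We would first try to prove that an interior bottom marked space at position $n-1$, with a bottom caret whose leftmost leaf lies at or after position $0$, is cancelled by $x_{i}$ with $i$ equal to that leaf's offset plus one. When the caret begins left of the pointer, we argue that the left space then carries bottom label \textbf{I} and Corollary \ref{cor1}(3) applies instead.
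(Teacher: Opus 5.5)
Your outline matches the paper's. Part (1) is Proposition \ref{prop2}. Your part (3) is the paper's argument: the top trees $0,\dots,n-2$ are trivial, the exceptions of Proposition \ref{prop1} are matched against Corollary \ref{cor1}, and the $\left[\begin{matrix}\mathbf{R}\\ \mathbf{I}\end{matrix}\right]$ case is split by where the enclosing bottom caret starts. If it starts at or after space $0$, some $x_{s+1}$ cancels it; if it starts to the left, the left space is $\left[\begin{matrix}\mathbf{L}\\ \mathbf{I}\end{matrix}\right]$.

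The genuine gap is in part (2). There you rule out $\left[\begin{matrix}\mathbf{R}\\ \mathbf{I}\end{matrix}\right]$ at the right space of $f$ by saying the bottom caret under it could be cancelled by some $x_j$, contradicting the failure of (1). That implication is false once tree $i-1$ is nontrivial, because a bottom caret can straddle its last leaf. Take $n=3$, and let $f$ have the following forest diagram (this is $x_1x_0^{-1}x_1^{-1}x_0$):
\begin{itemize}
\item the top forest is a single caret on the current tree, with leaves $a_1,a_2,a_3$, followed by trivial trees $b,c,d,\dots$;
\item the bottom forest is a single caret over $a_3,b,c$, with the bottom pointer at $a_1$.
\end{itemize}
The right space of $f$, between $b$ and $c$, is $\left[\begin{matrix}\mathbf{R}\\ \mathbf{I}\end{matrix}\right]$. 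Here $x_1^{-1}$ removes a top caret, yet neither $x_1$ nor $x_2$ cancels the bottom caret. The theorem survives only because the right space of $x_1^{-1}f$, between $a_2$ and $a_3$, is bottom-$\mathbf{N}$, so $x_1^{-1}$ itself shortens $f$. So what really excludes this case is the $\left[\begin{matrix}\mathbf{R}\\ \mathbf{R}\end{matrix}\right]$ label of the right space of $g=x_i^{-1}f$. You only use its bottom letter.

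Here is the missing argument.
\begin{itemize}
\item The right space of $g$ is not top-$\mathbf{N}$. So the $n-1$ top trees of $g$ lying between it and the right space of $f$ must all be trivial. These are the children of the removed caret to its right, together with trees $i,\dots,n-2$ of $f$. Otherwise the right space of $g$ would be the rightmost marked space to the left of a caret. Hence the right space of $f$ is the very next marked space after it.
\item The right space of $g$ is bottom-$\mathbf{R}$, not bottom-$\mathbf{N}$. So no bottom tree beginning at or after it can contain the right space of $f$ in its interior.
\item Therefore the right space of $f$ is exterior in both forests and to the right of both pointers. It is also in the support, because whatever keeps the right space of $g$ in the support must lie at or beyond the right space of $f$.
\item Its top and bottom labels therefore lie in $\{\mathbf{R},\mathbf{N}\}$, its weight is $2$, and $x_0$ lowers that weight to $1$.
\end{itemize}
This is exactly the paper's claim, and it disposes of all three exceptions of Proposition \ref{prop1} at once.

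Your reason for excluding exception (2) also needs correcting. You say the right space of $f$ is bottom-$\mathbf{R}$ ``because the bottom forest is unchanged'', but that conflates it with the right space of $g$. In fact the right space of $f$ can be bottom-$\mathbf{N}$; all you can say is that it lies to the right of the bottom pointer, which is enough to rule out $\mathbf{L}$.
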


\begin{proof}
Let us prove each of the three statements individually.
  \vspace{3mm}

    \textit{Statement 1:} \quad See Proposition \ref{prop2}.
      \vspace{3mm}

    \textit{Statement 2:} \quad By Corollary \ref{corrr}, if $\ell(x_i^{-1} f) > \ell(f)$ then the right space of $x_i^{-1} f$ has label $\left[ \begin{matrix} \mathbf{R} \\ \mathbf{R} \end{matrix} \right]$, which implies that the right space of $f$ must have one of the four following labels: $\rule{0pt}{24pt}\left[ \begin{matrix} \mathbf{R} \text{ or } \textbf{N} \\ \mathbf{R} \text{ or } \textbf{N} \end{matrix} \right]$, which all carry a weight of 2. However, the $\rule{0pt}{14pt}$ top label will become an \textbf{L} in $x_0 f$, and so now the associated weight will be 1. Therefore, $\ell(x_0 f) < \ell(f)$.

    \vspace{3mm}

    \textit{Statement 3: } \quad By Proposition \ref{prop1} $\ell(x_0 f) = \ell(f) \pm 1$. Let us examine the three cases that lead to $\ell(x_0 f) > \ell(f)$:
    \begin{enumerate}
        \item $x_0 f$ has larger support than $f$. This means that the top pointer of $\mathfrak{f}$ is at the right end of the support of $f$ and that the left space of $f$ has label $\rule{0pt}{24pt}\left[ \begin{matrix} \mathbf{L} \\ \mathbf{R} \end{matrix} \right]$, $\left[ \begin{matrix} \mathbf{L} \\ \mathbf{L} \end{matrix} \right]$ or $\left[ \begin{matrix} \mathbf{L} \\ \mathbf{I} \end{matrix} \right]$. In the first case, the left space of $f$ will not be in the su$\rule{0pt}{14pt}$pport of $x_0^{-1} f$, and so it will be unweighted. For the two other cases, notice that the top label will become an \textbf{R} and so the weight of the left space decreases. Hence, in all three cases we have $\ell(x_0^{-1} f) < \ell(f)$.

        \item The right space of $f$ has bottom label \textbf{L} and left-multiplication by $x_0$ does not remove this space from the support. Then the left space of $f$ can only have top label \textbf{L} (since it is by definition exterior) and by a similar argument the only two possibilities for bottom label are \textbf{L} or \textbf{I}. In both cases, the top label will become an \textbf{R} in $x_0^{-1} f$ and so the weights will decrease. Hence,  $\ell(x_0^{-1} f) < \ell(f)$.

        \item The right space of $f$ has label $\left[ \begin{matrix} \mathbf{R} \\ \mathbf{I} \end{matrix} \right]$. As the bottom label is \textbf{I} the space is enclosed by an $n$-caret from the bottom forest whose $\rule{0pt}{14pt}$ leftmost vertex is to the left of the top pointer (otherwise left-multiplication by $x_i$ would have removed a caret from the bottom forest). Also, this $n$-caret can only have children whose leftmost vertex is to the left of the top pointer. All of this implies that the left space of $f$ must have label $\rule{0pt}{24pt}\left[ \begin{matrix} \mathbf{L} \\ \mathbf{I} \end{matrix} \right]$. As before, the top label of the left space of $f$ will become an \textbf{R} in $x_0^{-1}f$ and so the associated weight will decrease from 1 to 0. Hence, $\ell(x_0^{-1} f) < \ell(f)$.  
    \end{enumerate}
\end{proof}

Note that, in Theorem \ref{cond3}, statements (1) and (2) are not mutually exclusive; this is only true for the $F(2)$ case. For example, for the element with the forest diagram pictured in Figure \ref{fig:counter_example}, note that left-multiplying by either $x^{-1}_1$ or $x_2$ would remove a caret from the top or bottom forests, respectively; however, only left-multiplying by $x_2$ would reduce the length. This is why statement (1) has ``priority" over statement (2).
\begin{figure}[H]
    \centering
    \includegraphics[width=0.25\linewidth]{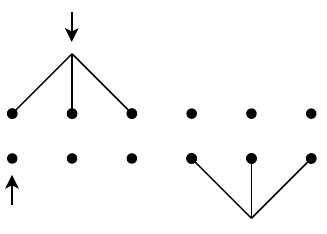}
    \caption{An example of why order matters in Theorem \ref{cond3}.}
    \label{fig:counter_example}
\end{figure}
To finish this section, let us give a corollary from the previous theorem.  
\begin{cor}\label{neeedeed}
    Let $f\in F(n)$ with reduced forest diagram $\mathfrak{f}$. Then there exists a minimum-length word $w$ for $f$ with the following properties:
    \begin{enumerate}
        \item Each instance of $x_i$ for any $i\in \{ 1, \dots, n-1\}$ creates a caret in the top forest of $\mathfrak{f}$.
        \item Each instance of $x^{-1}_i$ for any $i\in \{ 1, \dots, n-1\}$ creates a caret in the bottom forest of $\mathfrak{f}$.
    \end{enumerate}
\end{cor}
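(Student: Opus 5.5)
We build the word greedily from Theorem \ref{cond3}, by induction on $\ell(f)$. If $f=e$ the empty word works. Otherwise Theorem \ref{cond3} gives some $s\in\{x_0^{\pm1},\dots,x_{n-1}^{\pm1}\}$ with $\ell(sf)<\ell(f)$. By condition (2) of Lemma \ref{lemma}, which has already been verified, $\ell(sf)=\ell(f)-1$. By induction $sf$ has a minimum-length word $w'$ with properties (1) and (2) relative to the reduced diagram of $sf$. Then $w=s^{-1}w'$ has length $\ell(f)$ and represents $f$, so it is a minimum-length word for $f$. We read $w$ from right to left as a construction of $\mathfrak{f}$ from the trivial diagram, in which the letter $s^{-1}$ is applied last. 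The two properties then need to be checked only for this final letter, together with the fact that the earlier letters still build carets that survive in $\mathfrak{f}$.

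For the last letter we use which statement of Theorem \ref{cond3} produced $s$. If $s=x_0^{\pm1}$, then $s^{-1}$ is a power of $x_0$, so there is nothing to check, and pointer moves do not change the forests. If $s=x_i$ came from statement (1), then $x_i$ removes a bottom caret of $\mathfrak{f}$. Hence $s^{-1}=x_i^{-1}$ maps $sf$ to $f$ by creating that bottom caret, which is property (2). If $s=x_i^{-1}$ came from statement (2) with $\ell(x_i^{-1}f)<\ell(f)$, then $x_i^{-1}$ removes a top caret of $\mathfrak{f}$. Hence $s^{-1}=x_i$ recreates that top caret, which is property (1). In every case the caret created by the final letter is a caret of the reduced diagram $\mathfrak{f}$ and is never cancelled later. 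The earlier carets are carets of the reduced diagram of $sf$, and passing to $f$ only adds a caret or moves the pointer. So every caret built by $w'$ is still a caret of $\mathfrak{f}$, and both properties hold for all letters of $w$.

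The main obstacle is showing that we never need to use a generator $x_i^{\pm1}$, $i\ge1$, that reduces length by some other mechanism. For example, $x_i$ might reduce length by creating a top caret when the right space is labelled $\left[\begin{matrix}\mathbf{R}\\ \mathbf{R}\end{matrix}\right]$ (Proposition \ref{prop3}), and this would make the corresponding letter $x_i^{-1}$ of $w$ delete a top caret. We avoid this by always choosing $s$ according to the priority ordering of Theorem \ref{cond3}: statement (1) first, then statement (2) but only when $x_i^{-1}$ itself reduces length, and otherwise $x_0^{\pm1}$. Theorem \ref{cond3} guarantees that this choice always exists, because in statement (2) the alternative is $x_0$, and statement (3) only uses $x_0^{\pm1}$. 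We also have to check that the caret created in $f$ is not immediately cancelled against an opposing caret, so that the diagram obtained really is the reduced diagram $\mathfrak{f}$. This holds because the step starts from the reduced diagram $\mathfrak{f}$ and removes a genuine caret of it.
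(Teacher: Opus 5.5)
Your proof is correct and matches the argument the paper intends: the paper states this corollary as an immediate consequence of Theorem \ref{cond3} without writing out a proof. That argument repeatedly applies a length-reducing generator chosen in the priority order of Theorem \ref{cond3}, so that every $x_i^{\pm1}$ step removes a caret, and then reads the resulting path backwards; your induction on $\ell(f)$, with the check that the reversed step re-creates exactly the removed caret of the reduced diagram, makes this explicit.
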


\section{Dead Ends and Deep Pockets} \label{final-section}

Dead end elements were first introduced by Bogopolskii, in \cite{bogopolskii1997infinite}.
\begin{defn}
    Let $G$ be a group with finite generating set $S$ and a length function $\ell: G \rightarrow \mathbb{N}$ with respect to $S$. An element $g \in G$ is a \emph{dead end} if $\ell(sg) \leq \ell(g)$ for all $s \in S \cup S^{-1}$, where $S^{-1}$ denotes the set of inverses of elements of $S$.
\end{defn}

\begin{exa}
Let $f \in F(3)$ be the element with the following forest diagram. 
    \begin{figure}[H]
        \centering
        \includegraphics[width=1\linewidth]{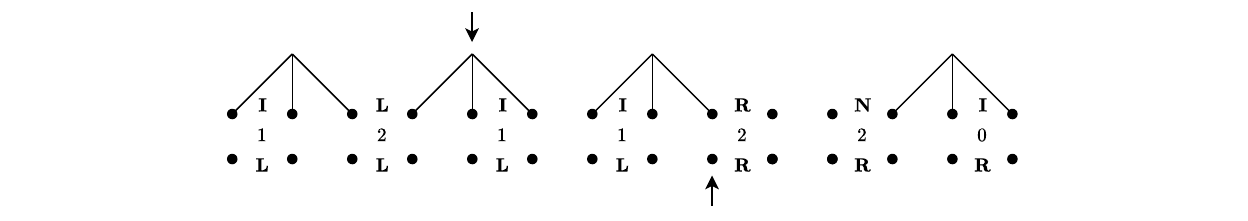}
        \vspace{-5mm}
        \caption{A typical dead end element $f \in F(3)$.}
        \label{fig:dead-end}
    \end{figure}

Note that, as the left space of $f$ has label $\left[ \begin{matrix} \mathbf{L} \\ \mathbf{L} \end{matrix} \right]$  left-multiplying by $x_0^{-1}$ will decrease the length,  by Corollary \ref{cor1}. Also, $\rule{0pt}{14pt} $ left-multiplying $f$ by $x_0$ will decrease the length since the right space of $f$ will now have top label \textbf{L}. Furthermore, since the right space of $f$ has label $ \rule{0pt}{24pt} \left[ \begin{matrix} \mathbf{R} \\ \mathbf{R} \end{matrix} \right]$, multiplying $f$ by $x_i$ for any $i \in \{1, \dots,n-1\}$ will decrease the length, by $\rule{0pt}{14pt}$ Proposition \ref{prop3}. Finally, left-multiplying $f$ by $x^{-1}_i$ for any $i\in \{1, \dots, n-1\}$ will decrease the length since a caret will be deleted and none of the weights will increase (this could only happen if the top label were \textbf{L}, which can not be the case). 
\end{exa}

We can characterise any dead end element in $F(n)$ with the following proposition. 
\begin{prop} \label{prop-dead-ends}
    Let $f \in F(n)$. Then $f$ is a dead end if and only if: 
    \begin{enumerate}
        \item Left-multiplying $f$ by $x^{-1}_i$ for any $i\in \{1,\dots,n-1\}$ will cancel a caret from the top forest, 
        \item the left space of $f$ has label $ \left[ \begin{matrix} \mathbf{L} \\ \mathbf{L} \end{matrix} \right]$,
        \item the right space of $f$ has label $ \left[ \begin{matrix} \mathbf{R} \\ \mathbf{R} \end{matrix} \right]$, and
        \item the right space of $x^{-1}_i f$ does not have label $ \left[ \begin{matrix} \mathbf{R} \\ \mathbf{R} \end{matrix} \right]$ for any $i\in \{1,\dots,n-1\}$.
    \end{enumerate}
\end{prop}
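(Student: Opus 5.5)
Since every generator changes $\ell$ by exactly $\pm 1$ (Propositions \ref{prop1}, \ref{prop2}, \ref{prop3} and Corollaries \ref{hmm}, \ref{corrr}), $f$ is a dead end exactly when $\ell(sf)<\ell(f)$ for every $s\in\{x_0^{\pm1},x_1^{\pm1},\dots,x_{n-1}^{\pm1}\}$. The plan is to go through the generators one at a time and record, for each, the earlier result that characterises when it decreases the length. The forward direction then reads off conditions (1)--(4), and the backward direction reverses each step.

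First, the generators $x_i^{-1}$ for $i\geq 1$. Each $x_i^{-1}$ either removes a caret from the top forest or builds a caret in the bottom forest. By Corollary \ref{hmm}, building a bottom caret increases $\ell$, so a dead end forces every $x_i^{-1}$ to cancel a top caret; this is (1). Given (1), Corollary \ref{corrr} says $\ell(x_i^{-1}f)<\ell(f)$ if and only if the right space of $x_i^{-1}f$ is not labelled $\left[ \begin{matrix} \mathbf{R} \\ \mathbf{R} \end{matrix} \right]$; this is (4). The same two results give the converse for these generators.

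Next, the generators $x_i$ for $i\geq1$. Each $x_i$ either cancels a bottom caret or creates a top caret. I claim that (1) rules out the first option. If $x_1^{-1}$ removes a top caret, the current tree is nontrivial. If $x_i f$ cancelled a bottom caret, the argument of Proposition \ref{prop2} shows that the right space of $f$ would be labelled $\left[ \begin{matrix} \mathbf{R} \\ \mathbf{I} \end{matrix} \right]$, and the $(i-1)$-th tree together with the $n-2$ trees to its right would have to be trivial, while (1) requires the $(i-1)$-th tree to be nontrivial. This needs care for $i=1$: there the relevant trees begin at the current tree, which is nontrivial, so no cancellation can occur. For $i>1$ the $(i-1)$-th tree is nontrivial, so again no cancellation. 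With this in hand, each $x_i$ creates a top caret, and Proposition \ref{prop3} says it decreases $\ell$ if and only if the right space of $f$ is labelled $\left[ \begin{matrix} \mathbf{R} \\ \mathbf{R} \end{matrix} \right]$; this is (3).

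Finally, $x_0^{\pm1}$. Given (3), the right space has top label $\mathbf{R}$ and lies in the support. Proposition \ref{prop1} then shows that $x_0$ decreases $\ell$: condition (1) there fails because the right space is already in the support, condition (2) fails because the bottom label is not $\mathbf{L}$, and condition (3) fails because the label is not $\left[ \begin{matrix} \mathbf{R} \\ \mathbf{I} \end{matrix} \right]$. For $x_0^{-1}$ I use Corollary \ref{cor1}. Condition (2) of (the proposition) is one of its alternatives, which gives sufficiency. For necessity I must exclude the other two alternatives under (1) and (3). Alternative (3) of Corollary \ref{cor1} requires a trivial current tree, which contradicts (1) for $i=1$. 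Alternative (1), that $x_0^{-1}f$ has smaller support, would mean the left space is labelled $\left[ \begin{matrix} \mathbf{L} \\ \mathbf{R} \end{matrix} \right]$ and the support ends at the current tree. That is impossible: the current tree is a nontrivial top tree, so the support extends to its right and the left space cannot be the boundary of the support.

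The main obstacle is these two exclusion arguments, namely that (1) prevents $x_i$ from cancelling a bottom caret and that the support cannot shrink under $x_0^{-1}$. Both rest on the current tree being nontrivial and on the precise indexing of which trees each $x_i$ touches, and that is where I expect to spend the effort.
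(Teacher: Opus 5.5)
Your plan is the paper's plan: test each generator against Propositions \ref{prop1}, \ref{prop2}, \ref{prop3} and Corollaries \ref{cor1}, \ref{hmm}, \ref{corrr}. The two exclusion arguments you single out are correct, and they are exactly what the paper's proof skips. Under (1) the $(i-1)$-th tree is nontrivial, so $x_i$ cannot cancel a bottom caret. A nontrivial current tree also rules out alternatives (1) and (3) of Corollary \ref{cor1}.

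The real problem is elsewhere. You use Proposition \ref{prop3} and Corollary \ref{corrr} as exact criteria for every $i$, and for $n\ge 3$ and $i\le n-2$ they are false. The caret built by $x_i$ sits on trees $i-1,\dots,i+n-2$, so its only children to the right of the right space are trees $n-1,\dots,i+n-2$. An exterior top label $\mathbf{N}$ on the right space, however, can be caused by a caret in any of trees $n-1,\dots,2n-3$. When it comes from trees $i+n-1,\dots,2n-3$, the $\mathbf{N}$ becomes $\mathbf{I}$, contrary to the proof of Proposition \ref{prop3}. Concretely, in $F(3)$:
\begin{itemize}
\item $x_4=x_0^{-1}x_2x_0$ has length $3$.
\item Its right space is the rightmost marked space to the left of the caret in tree $3$, so it is labelled $\left[\begin{smallmatrix}\mathbf{N}\\ \mathbf{R}\end{smallmatrix}\right]$.
\item Yet $x_1x_4=x_2x_1$ has length $2$, so $x_1$ lowers the length without the label $\left[\begin{smallmatrix}\mathbf{R}\\ \mathbf{R}\end{smallmatrix}\right]$.
\item Read backwards, $x_1^{-1}$ removes a top caret from $x_2x_1$ and raises the length, although the right space of $x_1^{-1}(x_2x_1)=x_4$ is not $\left[\begin{smallmatrix}\mathbf{R}\\ \mathbf{R}\end{smallmatrix}\right]$.
\end{itemize}

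For the forward direction this is repairable: derive (3) from $x_{n-1}$ alone. Its caret has all of trees $n-1,\dots,2n-3$ as children, so $\mathbf{N}$ stays $\mathbf{N}$ and $\mathbf{R}$ becomes $\mathbf{I}$. Hence a decrease forces $\left[\begin{smallmatrix}\mathbf{R}\\ \mathbf{R}\end{smallmatrix}\right]$, and conversely that label makes every $x_i$ decrease.

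Your converse for $x_i^{-1}$ ("the same two results give the converse"), however, fails and cannot be patched. By the paper's own length formula, the `if' direction is false as stated. Under (1), for $i\le n-2$, tree $i$ of $f$ is nontrivial and lies among the $n-1$ trees just right of the right space of $x_i^{-1}f$. So that space always has top label $\mathbf{N}$, and (4) holds automatically for those $i$. What actually decides the sign of $\ell(x_i^{-1}f)-\ell(f)$ is one of two things: whether the root caret of tree $i-1$ has a nontrivial child to the right of its marked space, or whether that space has bottom label $\mathbf{L}$ or $\mathbf{N}$.

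For example, take $f=x_0x_2x_0^{-4}x_1x_0^{2}x_2x_1x_2^{-2}\in F(3)$. It satisfies (1)--(4):
\begin{itemize}
\item its left and right spaces are labelled $\left[\begin{smallmatrix}\mathbf{L}\\ \mathbf{L}\end{smallmatrix}\right]$ and $\left[\begin{smallmatrix}\mathbf{R}\\ \mathbf{R}\end{smallmatrix}\right]$;
\item the right spaces of $x_1^{-1}f$ and $x_2^{-1}f$ are labelled $\left[\begin{smallmatrix}\mathbf{N}\\ \mathbf{I}\end{smallmatrix}\right]$ and $\left[\begin{smallmatrix}\mathbf{R}\\ \mathbf{I}\end{smallmatrix}\right]$.
\end{itemize}
Yet Theorem \ref{genthm} gives $\ell(f)=13$ and $\ell(x_1^{-1}f)=14$. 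The marked space under the current caret goes from $\left[\begin{smallmatrix}\mathbf{I}\\ \mathbf{I}\end{smallmatrix}\right]$ (weight $0$) to $\left[\begin{smallmatrix}\mathbf{N}\\ \mathbf{I}\end{smallmatrix}\right]$ (weight $2$), so $f$ is not a dead end.

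The paper's proof, which calls the `if' direction trivial, has the same hole. What your argument actually establishes is the `only if' direction. It also gives the `if' direction once (4) is replaced, for $i\le n-2$, by the criterion just described.
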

\begin{proof} 
It should be easy to see that the ``if'' direction follows trivially. To prove the ``only if" direction, let us assume  that $f$ is a dead end. 
\vspace{3mm}

  \textbf{Condition 1:}  as $f$ is a dead end we have that $\ell(x_i f) < \ell(f)$ for any $i\in \{1,\dots,n-1\}$, so the result follows by  the contrapositive of Corollary \ref{hmm}.
\vspace{3mm}

 \textbf{Condition 2:}    this is a direct consequence of Corollary \ref{cor1}, given that $\ell(x_0^{-1} f) < \ell(f)$.

  \vspace{3mm}

\textbf{Condition 3:}    now follows directly from Proposition \ref{prop3}, since $\ell(x_i f) < \ell(f)$.

  \vspace{3mm}
\textbf{Condition 4:}   now follows from Corollary \ref{corrr}, since $\ell(x^{-1}_if)< \ell(f)$ for any $i\in \{1,\dots,n-1\}$.
\end{proof}

Notice that there are several ways to meet condition 4. Also observe, that in the proof of Proposition \ref{prop-dead-ends}, we did not use the fact that $\ell(x_0f) < \ell(f)$. Therefore, we can conclude that $f \in F(n)$ is a dead end element without considering how left-multiplying by $x_0$ will affect the length. 

\begin{defn}
     Let $G$ be a group with finite generating set $S$ and length function $\ell: G \rightarrow \mathbb{N}$ with respect to $S$. We say a dead end element $g \in G$ has \emph{depth} $k$ if there exists a smallest $k \in \mathbb{N}$ such that 
     $$ \ell(s_1 \cdots s_{k+1} \cdot g) \leq \ell(g) $$
     for all $s_1, \dots, s_{k+1} \in S \cup S^{-1}$, where $S^{-1}$ denotes the set of inverses of elements of $S$.
\end{defn}

In \cite{wladis2008unusual}, using Fordham's method, Wladis proves that in $F(n)$ there exist no dead end elements with depth $k \geq 3$. That is, all dead end elements have depth two. We give an alternative proof of this fact.

\begin{prop}
  Let $f \in F(n)$ be a dead end element. Then $f$ has depth two. 
\end{prop}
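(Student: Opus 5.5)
We must show two things: depth is at least two, and there is a two-letter word $s_1s_2$ with $\ell(s_1s_2f)>\ell(f)$. Depth is never $1$ under the definition: a dead end already has $\ell(sf)\le\ell(f)$ for all generators $s$. Moreover, $\ell(s_1 s_2 f)\le \ell(f)$ whenever $s_2$ decreases the length, so what we need is to exhibit $s_1,s_2\in S\cup S^{-1}$ with $\ell(s_1s_2f)=\ell(f)+1$. Since $\ell(s_2 f)\le \ell(f)$ and each step changes length by at most one (condition (2) of Lemma \ref{lemma}), this forces $\ell(s_2f)=\ell(f)$, which is impossible for $F(n)$. Indeed $F(n)$ has a homomorphism to $\mathbb{Z}/2$ sending every generator to $1$ (the relations are length-preserving), so word lengths of $g$ and $sg$ have different parity. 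Hence every step changes $\ell$ by exactly $\pm1$, and the best we can get in two steps is $\ell(s_1s_2f)=\ell(f)$. So I read the claim as: $\ell(s_1s_2s_3 f)\le \ell(f)+1$ is not enough; we need some three-letter word giving $\ell(s_1s_2s_3f)>\ell(f)$. Depth two means the smallest $k$ with all $(k+1)$-letter words non-increasing fails at $k=1$ in the strong sense. Concretely, I will show that $f$ is a dead end (all 1- and 2-step moves stay within $\ell(f)$, trivially true by parity and the dead-end property) but some 3-step move escapes, i.e.\ $\ell(s_1s_2s_3f)=\ell(f)+1$. Reading the definition literally, this is exactly depth two.

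The plan is to use the characterisation in Proposition \ref{prop-dead-ends}. The left space of $f$ is labelled $\left[\begin{matrix}\mathbf{L}\\ \mathbf{L}\end{matrix}\right]$ and the right space $\left[\begin{matrix}\mathbf{R}\\ \mathbf{R}\end{matrix}\right]$. The natural escape is to move the pointer: first apply $x_0$, which gives $\ell(x_0f)=\ell(f)-1$ since the right space becomes top label $\mathbf{L}$, dropping weight $2\to1$. Then apply $x_0$ again or a caret-building generator so that the length rises twice in a row. I will try $s_3=x_0$ and then $s_2=x_i$ for suitable $i$ (attaching a caret to the right of the new pointer), then $s_1=x_0^{-1}$ or another $x_0$, tracking labels with Propositions \ref{prop1} and \ref{prop3}.

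The key step is an accounting of which spaces change. After $x_0f$, the pointer sits just past the old right space, now labelled $\left[\begin{matrix}\mathbf{L}\\ \mathbf{R}\end{matrix}\right]$ or with bottom label $\mathbf{N}$. Applying $x_1$ builds a top caret to the right; by Proposition \ref{prop3} this increases $\ell$ unless the new right space is $\left[\begin{matrix}\mathbf{R}\\ \mathbf{R}\end{matrix}\right]$. Then applying $x_0^{-1}$ should increase $\ell$ again via Corollary \ref{cor1}, because the old right space now has label $\left[\begin{matrix}\mathbf{L}\\ \mathbf{R}\end{matrix}\right]$ or $\left[\begin{matrix}\mathbf{L}\\ \mathbf{N}\end{matrix}\right]$, which satisfies none of the three decreasing conditions. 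This gives the net change $-1+1+1=+1$.

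The main obstacle is the middle step. If the right space of $x_0f$ is itself $\left[\begin{matrix}\mathbf{R}\\ \mathbf{R}\end{matrix}\right]$, then $x_1$ decreases the length, so I need a case split. In that case I would instead use $s_2=x_0$, $s_1=x_0$: moving twice right past $\mathbf{R}$-top spaces and then analysing via Proposition \ref{prop1}. Alternatively, I would use $x_i^{-1}$ dropping a negative caret at the end of the support (Corollary \ref{hmm}), which always increases length. The safest uniform choice to try first is $s_1 s_2 s_3 = x_i^{-1}\, x_j^{-1}\, x_0$ arranged so that the final two letters each build bottom carets, which by Corollary \ref{hmm} always increase $\ell$. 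One then only needs a bottom caret to be buildable twice after one decreasing step, and I expect that to be arrangeable by choosing $i,j$ indexing trivial trees far enough to the right.
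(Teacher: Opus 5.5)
You read the garbled depth definition the intended way: every two-letter word stays in the ball of radius $\ell(f)$, and some three-letter word leaves it. Your parity argument via the exponent-sum homomorphism is a clean way to see that no two-letter word can escape, a point the paper leaves implicit. Your final candidate $x_i^{-1}x_j^{-1}x_0$ is also exactly the word the paper uses.

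\textbf{The gap.} You never prove the step that makes this word work. You need $x_j^{-1}$, applied to $x_0f$, and then $x_i^{-1}$ both to drop their negative carets into the bottom forest, so that Corollary \ref{hmm} applies twice. The mechanism you suggest, ``choosing $i,j$ indexing trivial trees far enough to the right'' (or dropping a caret ``at the end of the support''), is not available. The generator $x_i^{-1}$ only ever acts on the $(i-1)$-th tree to the right of the current tree, and $i-1\le n-2$.

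\textbf{The missing idea.} The top label $\mathbf{R}$ of the right space of $f$ (Proposition \ref{prop-dead-ends}(3)) forces the current tree of $x_0f$ and the $n-2$ trees to its right to be trivial. The right space of $f$ becomes space $0$ of $x_0f$, and the exterior spaces between trivial trees $0,\dots,n-3$ of $x_0f$ are unmarked. So if one of trees $0,\dots,n-2$ were non-trivial, the right space of $f$ would be the rightmost marked space to the left of the root caret of the leftmost such tree. It would then be labelled $\mathbf{N}$, not $\mathbf{R}$.

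\textbf{How this closes the argument.} Every $x_j^{-1}$ drops a bottom caret on $x_0f$. Doing so inserts $n-1$ trivial trees immediately after the $(j-1)$-th tree, so trees $0,\dots,n-2$ remain trivial. Hence every $x_i^{-1}$ drops a second bottom caret, and no reduction occurs since these carets lie under trivial top trees. Corollary \ref{hmm} then gives $\ell(x_i^{-1}x_j^{-1}x_0f)=\ell(f)-1+1+1$ for all $(n-1)^2$ choices, with no case analysis.

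\textbf{Your other routes.} These do not close the argument either.
\begin{itemize}
\item The word $x_0^{-1}x_1x_0$ does work when the right space of $x_0f$ is not labelled $\left[\begin{matrix}\mathbf{R}\\ \mathbf{R}\end{matrix}\right]$. The bottom forest is untouched, so the left space of $x_1x_0f$ is labelled $\left[\begin{matrix}\mathbf{L}\\ \mathbf{R}\end{matrix}\right]$. Since the current tree is now non-trivial, none of the conditions of Corollary \ref{cor1} holds.
\item Your fallback for the remaining case, $x_0x_0x_0$, fails exactly there. If the right space of $x_0f$ is labelled $\left[\begin{matrix}\mathbf{R}\\ \mathbf{R}\end{matrix}\right]$, none of the three exceptional conditions of Proposition \ref{prop1} holds. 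So the second $x_0$ lowers the length again, to $\ell(f)-2$.
\end{itemize}
As written, the case split is left open, and the uniform argument that would avoid it is asserted (``I expect'') rather than proved.
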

\begin{proof}
    Let $f \in F(n)$ be a dead end element. By Proposition \ref{prop-dead-ends}, the right space of $f$ has label $ \left[ \begin{matrix} \mathbf{R} \\ \mathbf{R} \end{matrix} \right]$, which implies that the current tree  of $x_0 f$ and the $n-2$ trees to the right of it must be  $\rule{0pt}{14pt}$ trivial. Therefore, left-multiplying $x_0 f$ by $x^{-1}_i$ for any $i \in \{1, \dots,n-1\}$ will create a caret in the bottom forest, whose enclosed marked space will be labelled $ \rule{0pt}{24pt} \left[ \begin{matrix} \mathbf{R} \\ \mathbf{I} \end{matrix} \right]$ and hence carry a weight of 0. Thus, each time we left-multiply $x_0f$ by $x^{-1}_i$ for any $i \in \{1, \dots,n-1\}$, the length will increase by one. 

    Finally, since $f$ is a dead end we have that $\ell(x_0f)= \ell(f)-1$ and therefore $\ell(x_i x_j x_0f) = \ell(f)+1$, for any $i,j \in \{1, \dots,n-1\}$. That is, the depth of $f$ is two. 
\end{proof}

Note that, as opposed to the case for $F(2)$ where for a dead end element $f$ there is only one possibility of $g\in F(2)$ such that $\ell(g f) = \ell(f)+1$ (namely $g=x^{-2}_1x_0$), in $F(n)$ we have $(n-1)^2$ different ways of doing this.

\begin{exa}
    Let $f \in F(3)$ be the dead end element from Figure \ref{fig:dead-end}. The following are all the ways we can left-multiply $f$ by $g \in F(3)$ such that $\ell(gf)= \ell(f)+1$.
    
    \begin{figure}[H]
        \includegraphics[width=1\linewidth]{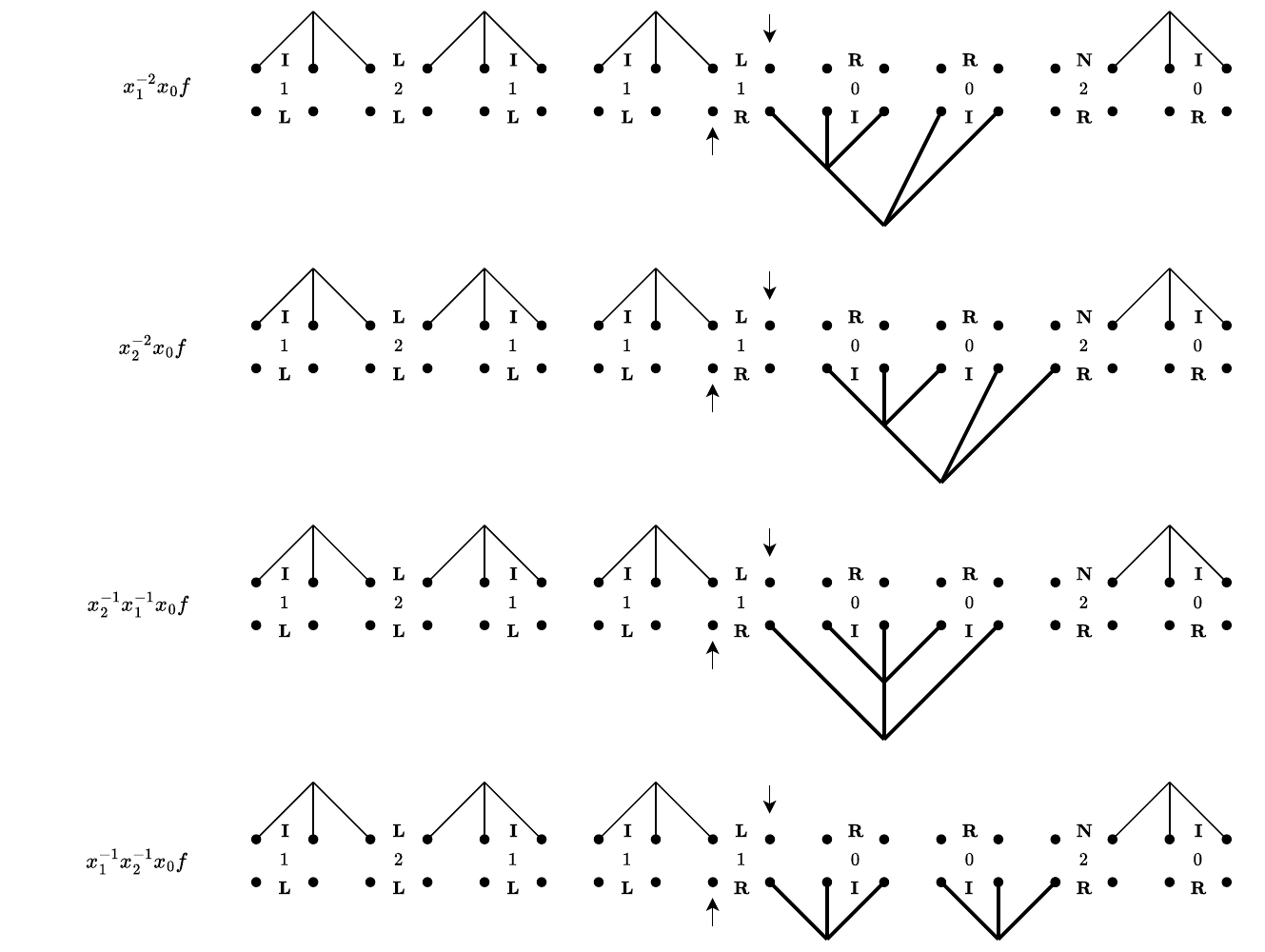}
    \end{figure}

\end{exa}

\bibliographystyle{alpha}
\bibliography{bibliography}

@book{burrillobook,
    author = {Burillo, José},
    title = {Introduction to {Thompson’s Group $F$}},
    publisher = {\url{https://web.mat.upc.edu/pep.burillo/F%20book.pdf}},
    note = {Accessed: 10th of October 2024}
}

@article{belk2005forest,
  title={{``Forest Diagrams for Elements of {Thompson's Group $F$}"}},
  author={Belk, James and Brown, Kenneth},
  journal={International Journal of Algebra and Computation},
  volume={15},
  number={05n06},
  pages={815--850},
  year={2005},
  publisher={World Scientific}
}

@article{fordham2009minimal,
  title={{``Minimal Length Elements of {Thompson’s groups $F (p)$}''}},
  author={Fordham, S Blake and Cleary, Sean},
  journal={Geometriae Dedicata},
  volume={141},
  pages={163--180},
  year={2009},
  publisher={Springer}
}

@article{BROWN198745,
title = {{``Finiteness Properties of Groups''}},
journal = {Journal of Pure and Applied Algebra},
volume = {44},
number = {1},
pages = {45-75},
year = {1987},
issn = {0022-4049},
doi = {https://doi.org/10.1016/0022-4049(87)90015-6},
url = {https://www.sciencedirect.com/science/article/pii/0022404987900156},
author = {Kenneth Brown}
}

@book{higman1974finitely,
  title={Finitely Presented Infinite Simple Groups},
  author={Higman, G},
  isbn={9780708103005},
  lccn={74077210},
  series={Notes in Pure Math},
  url={https://books.google.com.br/books?id=LPvuAAAAMAAJ},
  year={1974},
  publisher={Department of Pure Mathematics, Department of Mathematics, I.A.S., Australian National University}
}

@article{blake2003minimal,
  title={{``Minimal Length Elements of Thompson's Group $F$''}},
  author={Fordham, S Blake},
  journal={Geometriae Dedicata},
  volume={99},
  number={1},
  pages={179--220},
  year={2003},
  publisher={Springer}
}

@article{bogopolskii1997infinite,
  title={{``Infinite Commensurable Hyperbolic Groups are Bi-Lipschitz Equivalent''}},
  author={Bogopolskii, Oleg },
  journal={Algebra and Logic},
  volume={36},
  number={3},
  pages={155--163},
  year={1997},
  publisher={Springer}
}

@article{cleary2004combinatorial,
  title={{``Combinatorial Properties of Thompson’s Group $F$''}},
  author={Cleary, Sean and Taback, Jennifer},
  journal={Transactions of the American Mathematical Society},
  volume={356},
  number={7},
  pages={2825--2849},
  year={2004}
}

@article{wladis2008unusual,
  title={{``Unusual Geodesics in Generalizations of Thompson’s Group $F$"}},
  author={Wladis, Claire},
  journal={Illinois Journal of Mathematics},
  volume={53},
  number={2},
  pages={483--514},
  year={2009},
  publisher={Duke University Press}
}

@article{cleary2003thompson,
  title={{``Thompson's Group $F$ is Not Almost Convex''}},
  author={Cleary, Sean and Taback, Jennifer},
  journal={Journal of Algebra},
  volume={270},
  number={1},
  pages={133--149},
  year={2003},
  publisher={Elsevier}
}

@incollection{BelkBux2005,
  author    = {James Belk and Kai-Uwe Bux},
  title     = {{``Thompson’s Group $F$ is Maximally Nonconvex"}},
  booktitle = {Geometric Methods in Group Theory},
  series    = {Contemporary Mathematics},
  volume    = {372},
  publisher = {American Mathematical Society},
  pages     = {131--146},
  year      = {2005},
  mrnumber  = {2139683},
  url       = {https://www.ams.org/conm/372/}
}

@article{elder2010counting,
  title={{``Counting Elements and Geodesics in Thompson's Group $F$"}},
  author={Elder, Murray and Fusy, {\'E}ric and Rechnitzer, Andrew},
  journal={Journal of Algebra},
  volume={324},
  number={1},
  pages={102--121},
  year={2010},
  publisher={Elsevier}
}

@phdthesis{francesco,
    author = {Francesco Mattuci} ,
    title = {{``Algorithms and Classification in Groups of Piecewise-Linear Homeomorphisms"}},
    school = {Cornell University} ,
    year = {2008}
}

\end{document}